\definecolor{violet}{RGB}{12,6,120}
\newcommand{\eqdef}{\stackrel{\text{\tiny def}}{=}}
\newtheorem{corollary}{Corollary}
\newtheorem{definition}{Definition}
\newtheorem{lemma}{Lemma}
\newtheorem{theorem}{Theorem}
\DeclareRobustCommand{\argmin}[1]{\underset{#1}{\theargmin}\mspace{4mu}}
\DeclareRobustCommand{\E}[1]{\mathbb{E} \left [#1 \right]}             
\DeclareRobustCommand{\fm}[1]{\bm{\mu} \mspace{-2mu} \big [\mspace{-2mu} #1 \mspace{-2mu}\big]}                 
\DeclareRobustCommand{\md}[1]{\bm{m} \mspace{-2mu}   \big [\mspace{-2mu} #1 \big]}    
\DeclareRobustCommand{\sfm}[1]{\widehat{\bm{\mu}}_N \mspace{-2mu}\big [\mspace{-2mu}#1 \mspace{-2mu}\big]}    
\DeclareRobustCommand{\smd}[1]{\widehat{\bm{m}}_N\left [#1 \right]}                                           
\DeclareRobustCommand{\sE}[1]{\widehat{\mathbb{E}}_N\left [#1 \right]} 
\DeclareRobustCommand{\prob}[1]{\mspace{2mu}\mathbb{P}\mspace{-2mu}\left(\mspace{-2mu} #1 \mspace{-2mu}\right)}
\DeclareRobustCommand{\var}[1]{\variance\left[#1\right]}
\DeclareMathOperator{\theargmin}{argmin}
\DeclareMathOperator{\variance}{var}
\DeclareRobustCommand{\o}[1]{{\scriptstyle \mathcal{O}} \left(#1\right)}
\DeclareMathOperator{\pr}{\mathbb{P}}
\NewDocumentCommand \F {o m}   
{
  \IfNoValueTF {#2} {\widehat{F}\left(#2\right)}{\widehat{F}_{#1}\mspace{-4mu}\left(#2\right) }
}
\DeclareRobustCommand{\o}[1]{{\scriptstyle \mathcal{O}} \left(#1\right)}
\DeclareRobustCommand{\O}[1]{{\mathcal{O}} \mspace{-4mu} \left(#1\right)}
\newcommand{\iplus}{\mspace{-2mu}\left \lvert \cE (\bB) \right \rvert \mspace{-2mu}}
\newcommand{\bA}{\bm{A}}
\newcommand{\bAp}{\bm{A}^\prime}
\newcommand{\bAk}{\bm{A}^{(k)}}
\newcommand{\aij}{a_{ij}}
\newcommand{\aijp}{a_{i^\prime j^\prime}}
\newcommand{\akij}{a^{(k)}_{ij}}
\newcommand{\akijp}{a^{(k)}_{i^\prime j^\prime}}
\newcommand{\roij}{\rho_{ij, i^\prime j^\prime}}
\newcommand{\pij}{p_{ij}}
\newcommand{\pijp}{p_{i^\prime j^\prime}}
\newcommand{\bB}{\bm{B}}
\newcommand{\bP}{\bm{P}}
\newcommand{\cE}{\mathcal{E}}
\newcommand{\nE}{\overline{\mathcal{E}}}
\newcommand{\cG}{\mathcal{G}}
\newcommand{\cS}{\mathcal{S}}
\newcommand{\gnP}{\mathcal{G}\mspace{-1mu}\big(\mspace{-2mu}n,\bP\mspace{-1mu}\big)}
\newcommand{\Gk}{G^{(k)}}
\newcommand{\ER}{Erd\H{o}s-R\'enyi\xspace}
\newcommand{\fr}{Fr\'echet\xspace}
\newcommand{\citeg}[1]{(e.g., \cite{#1})}
\begin{document}
\title{Sharp Threshold for the Fr\'echet Mean (or Median) of Inhomogeneous Erd\H{o}s-R\'enyi Random Graphs}
\author{Fran\c{c}ois G. Meyer\thanks{F.G.M was supported by the National Natural Science Foundation
    (\href{https://www.nsf.gov/awardsearch/showAward?AWD_ID=1815971}{CCF/CIF 1815971}).
  }\\
  Applied Mathematics, University of Colorado at Boulder, Boulder CO 80305\\
  \href{mailto:fmeyer@colorado.edu}{\sf \small fmeyer@colorado.edu}\\{\small \url{https://francoismeyer.github.io}}
}
\date{}
\maketitle
\begin{abstract}
  We address the following foundational question: what is the population, and sample, Frechet mean (or median) graph of
  an ensemble of inhomogeneous \ER random graphs? We prove that if we use the Hamming distance to compute distances
  between graphs, then the Frechet mean (or median) graph of the inhomogeneous random graphs with expected adjacency
  matrix $\mathbf{P}$ is obtained by thresholding $\mathbf{P}$: an edge exists between the vertices $i$ and $j$ in the
  mean graph if and only if $p_{ij} > 1/2$.  We show that the result also holds for the sample mean (or median) when
  $\mathbf{P}$ is replaced with the sample mean adjacency matrix. Consequently, the \fr mean (or median) graph of
  inhomogeneous \ER random graphs exhibits a sharp threshold: it is either the empty graph if $\max p_{ij} < 1/2$, or
  the complete graph if $ \min p_{ij} > 1/2 $. This novel theoretical result has some significant practical
  consequences; for instance, the \fr mean of an ensemble of sparse inhomogeneous random graphs is always the empty graph.
\end{abstract}
\noindent%
{\it Keywords:}  \fr mean; \fr median; statistical network analysis.
\section{Introduction}
The \fr mean (or median) graph, which extends the notion of mean to probability measures defined on metric spaces
\cite{frechet47}, has become a standard tool for the analysis of graph-valued data
\citeg{dubey20,ferrer10,ginestet17,jain16b,kolaczyk20,lunagomez20}. At the same time, inhomogeneous \ER random graphs
\cite{kovalenko71,bollobas07} have great practical importance since they provide tractable models that capture many of
the topological structures of real networks \cite{goldenberg10,kolaczyk17}. 

Let $\cG$ be the set of all simple labeled graphs with vertex set $\left\{1, \ldots ,n\right\}$, and let $\cS$ be the set of
$n \times n$ adjacency matrices of graphs in $\cG$,
\begin{equation}
  \cS = \left \{
    \bA \in \{0,1\}^{n \times n}; \text{where} \; a_{ij} = a_{ji},\text{and}  \; a_{i,i} = 0; \; 1 \leq i < j \leq n
  \right\}.
    \label{adjacency_matrices}
\end{equation}
We denote by $\gnP$, the probability space formed by inhomogeneous \ER random graphs \cite{kovalenko71,bollobas07}, which is
defined as follows. We assign to every graph $G \in \cG$, with adjacency matrix $\bA$, the probability 
\begin{equation}
  \prob{\bA} = \prod_{1 \le 1 < j \le n} \left [p_{ij} \right]^{a_{ij}} \left [1- p_{ij} \right]^{1 - a_{ij}}.
  \label{laproba}
\end{equation}
The $n \times n$ matrix $\bP = \left [p_{ij}\right]$ determines the edge probabilities $0 \le p_{ij} \le 1$, with $p_{ii} = 0$.
Throughout the paper, we identify $\gnP$ with the probability space $(\cS, \pr)$, where $\pr$ is defined by (\ref{laproba}).

The prominence of $\gnP$ stems from its ability to provide tractable models of random graphs that can capture many of
the structures of real networks (e.g., stochastic block models \cite{abbe17,fortunato16} to model communities
\cite{snijders11}). We equip $\cG$ with the Hamming distance defined as follows.
\begin{definition}
The Hamming distance between $G$ and  $G^\prime$ in $\cG$, with adjacency matrix $\bA$ and $\bAp$ respectively, is given by
  \begin{equation}
    d_H(G,G^\prime) = \sum_{1 \leq i< j \leq n} \lvert a_{ij} - a^\prime_{ij}\rvert.
  \end{equation}
    \label{hamming}
\end{definition}
We characterize the mean of the probability $\pr$ with the \fr mean and median graphs,
\cite{frechet47,schweizer60}, that are defined as follows.
\begin{definition}
  The \fr mean of the probability measure $\pr$ is the set formed by the solutions to
  \begin{equation}
    \fm{\pr} =  \argmin{G\in \cG} \sum_{G^\prime \in \cG} d_H^2(G,G^\prime) \prob{G^\prime}
    \label{mean}
  \end{equation}
  and the \fr median of the probability measure $\pr$ is the set formed by the solutions to
  \begin{equation}
    \md{\pr} = \argmin{G\in \cG} \sum_{H \in \cG} d_H(G,H) \prob{H}
    \label{median},
  \end{equation}
where $d_H$ is the Hamming distance (\ref{hamming}).
\end{definition}
We note that solutions to the minimization problems (\ref{mean}) and (\ref{median}) 
always exist, but need not be unique. Because all the results in this paper hold for any graph in the set formed by the
solutions to (\ref{mean}) and (\ref{median}), and without any loss of generality, we assume that $\fm{\pr}$
and $\sfm{\pr}$ each contains a single element.

Because the focus of this work is not the computation of the \fr mean graph, but rather a theoretical analysis of the properties
that the \fr mean graph inherits from the probability measure $\pr$, defined in (\ref{laproba}), we can assume that all the
graphs are defined on the same vertex set.\\

This notion of centrality is well adapted to metric spaces (since graph sets are not Euclidean spaces
\citeg{chowdhury18,jain12,jain16a,jain16b,kolaczyk20}.  The vital role played by the \fr mean as a location parameter, is
exemplified in the work of \cite{banks98} and \cite{lunagomez20}, who have created novel families of random graphs by generating
random perturbations around a given \fr mean (also called modal or central graph in \cite{banks98}). In practice, the \fr mean
itself is computed from a training set of graphs that display specific topological features of interest. To take full advantage
of the training set, one needs to insure that the sample \fr mean inherits from the training set the desired topological
structure.\\

By replacing $\pr$ with the empirical measure, the concept of \fr mean and median graphs can be extended to a sample of
graphs defined on the same vertex set $\left\{1, \ldots ,n\right\}$.
\begin{definition}
  Let $\left\{ \Gk \right\}_{1\leq k \leq N}$,be independent random graphs, sampled from $\pr$. The sample \fr mean is the set
  composed of the solutions to
  \begin{equation}
    \sfm{\pr} =  \argmin{G\in \cG} \frac{1}{N}\sum_{k=1}^N d^2(G,\Gk);
    \label{sample-frechet-mean}
  \end{equation}
the sample \fr median is the set composed of the solutions to
  \begin{equation}
    \smd{\pr} =  \argmin{G\in \cG} \frac{1}{N}\sum_{k=1}^N d(G,\Gk).
    \label{sample-frechet-median}
  \end{equation}
\end{definition}
Several algorithms have been proposed to compute the sample \fr mean and median when the distance $d$ is the edit distance
(e.g., \cite{bardaji10,ferrer09,jiang01}), or the Euclidean distance (e.g., \cite{jain08,jain09}).

\subsection{Our main contributions}
The prominence of the inhomogeneous \ER random graph model \cite{bollobas07} prompts the following critical question: does the
\fr mean of $\pr$ inherit from the probability space $\gnP$ any of the edge connectivity information encoded by
$\bP$?

In this paper, we answer this question. We show in Theorem~\ref{theorem1} that the population \fr mean graph $\fm{\pr}$ can be
obtained by thresholding the mean adjacency matrix $\E{\bA}=\bP$; an edge exists between the vertices $i$ and $j$ in $\fm{\bA}$ if
and only if $\E{\bA}_{ij} > 1/2$.  We prove in Theorem~\ref{theorem2} that this result also holds for the sample \fr mean graph,
$\sfm{\pr}$, when $\E{\bA}$ is replaced with the sample mean adjacency matrix, $\sE{\bA}$.

\section{Main Results
  \label{results}}

Let $\bP = \left [p_{ij}\right]$ be an $n \times n$ symmetric matrix with entries $0 \le p_{ij} \le 1$. In the following
two theorems we evaluate the \fr mean (or median) graph, and the sample mean (or median) graph of $\gnP$. We first
consider the \fr mean graph and median graph. We denote by $[n]$ the set $\{1,\ldots,n\}$.
\subsection{The population \fr mean graph and median graph of $\mathcal{G}(n,\mathbb{P})$}
\begin{theorem}
  \label{theorem1}
  Let $\md{\bA}$ and $\fm{\bA}$ be the adjacency matrices of the \fr median graph $\md{\pr}$ and mean graph $\fm{\pr}$
  respectively. Then $\md{\bA}$ and $\fm{\bA}$ are given by
  \begin{equation}
    \forall i,j \in [n],\quad
            \Big[\md{\bA}\Big]_{ij} = \Big[\fm{\bA}\Big]_{ij} = 
    \begin{cases}
      1 & \text{if} \quad p_{ij} > 1/2,\\
      0 & \text{otherwise.}
    \end{cases}
    \label{populationFrechetMean}
  \end{equation}
\end{theorem}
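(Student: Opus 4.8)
The plan is to exploit the fact that both objective functions in (\ref{mean}) and (\ref{median}) decompose as independent sums over the potential edges $\{i,j\}$ with $1 \le i < j \le n$, so the global minimization over $\cG$ reduces to $\binom{n}{2}$ independent scalar minimizations over $\{0,1\}$. Concretely, write $a_{ij}$ for the entries of the candidate adjacency matrix $\bA$ and $a'_{ij}$ for those of a random graph $G'$ drawn from $\pr$. Since $d_H(G,G') = \sum_{i<j} |a_{ij} - a'_{ij}|$ and each $a'_{ij}$ is an independent $\BERN(p_{ij})$ random variable, the median objective becomes
\begin{equation}
  \sum_{H \in \cG} d_H(G,H)\,\prob{H} = \E{d_H(G,\bAp)} = \sum_{i<j} \E{|a_{ij} - a'_{ij}|} = \sum_{i<j}\Big( a_{ij}(1-p_{ij}) + (1-a_{ij}) p_{ij}\Big).
\end{equation}
Each summand depends only on the single bit $a_{ij}$, and is minimized by taking $a_{ij} = 1$ when $p_{ij} > 1-p_{ij}$, i.e. $p_{ij} > 1/2$, and $a_{ij}=0$ when $p_{ij} < 1/2$ (the value $p_{ij}=1/2$ giving a tie, consistent with the convention in the statement that the set of minimizers is represented by a single element, here the choice $a_{ij}=0$). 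This yields the median half of (\ref{populationFrechetMean}).

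For the mean, I would carry out the analogous computation with $d_H^2$ in place of $d_H$. The key observation is that $|a_{ij}-a'_{ij}| \in \{0,1\}$, so $|a_{ij}-a'_{ij}|^2 = |a_{ij}-a'_{ij}|$ pointwise; however one must be careful because $d_H^2 = \big(\sum_{i<j}|a_{ij}-a'_{ij}|\big)^2$ contains cross terms $|a_{ij}-a'_{ij}|\,|a_{kl}-a'_{kl}|$ for distinct pairs. Expanding the square and taking expectations, the diagonal terms contribute $\sum_{i<j}\E{|a_{ij}-a'_{ij}|}$ (the same expression as in the median case), while each cross term factorizes by independence of $a'_{ij}$ and $a'_{kl}$ into $\E{|a_{ij}-a'_{ij}|}\cdot\E{|a_{kl}-a'_{kl}|}$. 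Hence $\E{d_H^2(G,\bAp)} = \sum_{i<j} q_{ij} + \sum_{(i,j)\neq(k,l)} q_{ij} q_{kl}$ where $q_{ij} \eqdef \E{|a_{ij}-a'_{ij}|} = a_{ij}(1-p_{ij}) + (1-a_{ij})p_{ij}$; equivalently this equals $\big(\sum_{i<j} q_{ij}\big)^2 + \sum_{i<j}(q_{ij} - q_{ij}^2)$. Since $t \mapsto t^2$ is increasing on $[0,1]$ and $t \mapsto t - t^2$ has a fixed concave shape, one checks that each $q_{ij}$ enters the total monotonically, so the objective is minimized by minimizing every $q_{ij}$ separately — and $q_{ij}$ as a function of the bit $a_{ij}$ is minimized by exactly the same rule $a_{ij} = \mathbf{1}\{p_{ij} > 1/2\}$. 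This gives the mean half of (\ref{populationFrechetMean}) and shows it coincides with the median.

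The main obstacle, and the only place requiring genuine care, is the squared objective: unlike the median, it does not split into a clean sum of per-edge terms, so I cannot immediately invoke coordinatewise minimization. The argument above resolves this by rewriting $\E{d_H^2}$ as a function of the vector $(q_{ij})_{i<j}$ that is coordinatewise nondecreasing on $[0,1]^{\binom n2}$ — from $\big(\sum q_{ij}\big)^2 + \sum (q_{ij}-q_{ij}^2)$ one sees $\partial/\partial q_{ij} = 2\sum_{k<l} q_{kl} + 1 - 2 q_{ij} = 2\sum_{(k,l)\neq(i,j)} q_{kl} + 1 \ge 0$ — so the minimizer is attained by minimizing each $q_{ij}$ over its two admissible values $\{p_{ij}, 1-p_{ij}\}$, which selects $a_{ij}=1$ iff $1-p_{ij} < p_{ij}$. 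I would state this monotonicity as a short lemma (or inline remark) to keep the proof clean. Once this is in place, both assertions of Theorem~\ref{theorem1} follow, and the ``sharp threshold'' corollary (empty graph when $\max p_{ij} < 1/2$, complete graph when $\min p_{ij} > 1/2$) is then immediate from (\ref{populationFrechetMean}).
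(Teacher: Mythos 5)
Your argument is correct, and it reaches the same destination as the paper by a mildly different route, so a brief comparison is worthwhile. The paper first expands $d_H^2(\bA,\bB)$ by splitting the entries of $\bA$ over $\cE(\bB)$ and its complement (Lemma~\ref{F2ofBlemma}), takes expectations term by term to obtain $F_2(\bB)=\big[\sum_{(i,j)\in\cE(\bB)}(1-2p_{ij})+\sum_{i<j}p_{ij}\big]^2+\sum_{i<j}p_{ij}(1-p_{ij})$ (Lemma~\ref{lemma1}), and then minimizes the single aggregate scalar $x=\sum_{(i,j)\in\cE(\bB)}(1-2p_{ij})$ using that $t\mapsto t^2$ increases to the right of its vertex. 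You instead treat the summands $\lvert a_{ij}-a'_{ij}\rvert$ as independent $\BERN(q_{ij})$ variables with $q_{ij}=a_{ij}(1-p_{ij})+(1-a_{ij})p_{ij}$, write $\E{d_H^2}=\big(\sum_{i<j} q_{ij}\big)^2+\sum_{i<j} q_{ij}(1-q_{ij})$, and finish with a coordinatewise-monotonicity argument; your expression is identical to the paper's, since $\sum_{i<j}q_{ij}$ is exactly the paper's bracketed quantity. What your route buys is the elimination of the edge/non-edge cross-term bookkeeping of Lemma~\ref{F2ofBlemma}, which is a genuine simplification; what it costs is the extra monotonicity lemma, which is in fact avoidable: since $q_{ij}\in\{p_{ij},\,1-p_{ij}\}$, one has $q_{ij}(1-q_{ij})=p_{ij}(1-p_{ij})$ whatever the value of $a_{ij}$, so the ``variance'' term is a constant independent of the candidate graph, and because $\sum_{i<j}q_{ij}\ge 0$, minimizing $\E{d_H^2}$ reduces at once to minimizing $\sum_{i<j}q_{ij}$ term by term --- the same objective as the median. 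Either way, the per-edge minimizer is $a_{ij}=1$ iff $1-p_{ij}<p_{ij}$, which is the claimed threshold, and your handling of the tie at $p_{ij}=1/2$ is consistent with the paper's convention on non-unique minimizers.
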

\begin{proof}
  The proof is given in Section \ref{proofTheorem1}.
\end{proof}
\subsection{The sample \fr mean graph of a graph sample in $\mathcal{G}(n,\mathbb{P})$}
We now turn our attention to the sample \fr mean graph. The computation of the sample \fr mean graph using the Hamming distance
is NP-hard~\citeg{chen19}. For this reason, several alternatives have been proposed \citeg{ferrer10,ginestet17}.

Before presenting the second result, we take a short detour through the sample \fr median graph
(e.g., \cite{han16,jiang01,mukherjee09}), minimiser of (\ref{sample-frechet-median}), and which can be computed using the
majority rule \cite{banks98}.
\begin{lemma}[\cite{banks98}]
  \label{lemma8}
  The adjacency matrix $\smd{\bA}$ of the sample \fr mean graph $\smd{\pr}$ is given by
  \begin{equation}
    \forall i,j \in [n],\quad    \Big[ \smd{\bA}_{ij} \Big] =
    \begin{cases}
      1 & \text{if} \; \sum_{k=1}^N \akij \ge N/2,\\
      0 &  \text{otherwise.}
    \end{cases}
    \label{majority-rule}
  \end{equation}
\end{lemma}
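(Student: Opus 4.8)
The plan is to exploit the fact that the Hamming distance decomposes as a sum over the $\binom{n}{2}$ vertex pairs, so the objective function in~(\ref{sample-frechet-median}) separates into a sum of independent one-dimensional problems, one per pair $\{i,j\}$. Write $G$ with adjacency matrix $\bA = [a_{ij}]$ ranging over $\cS$; then
\begin{equation}
  \frac{1}{N}\sum_{k=1}^N d_H(G,\Gk) = \sum_{1 \le i < j \le n} \frac{1}{N}\sum_{k=1}^N \lvert a_{ij} - \akij \rvert .
\end{equation}
Because the variables $a_{ij}$ for distinct pairs are unconstrained (any choice in $\{0,1\}$ yields a valid element of $\cS$), minimising the left-hand side over $G \in \cG$ amounts to minimising each inner sum $\frac{1}{N}\sum_{k=1}^N \lvert a_{ij} - \akij \rvert$ separately over $a_{ij} \in \{0,1\}$.

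The next step is to solve this scalar problem. For fixed $\{i,j\}$, let $s = \sum_{k=1}^N \akij$ be the number of sampled graphs containing the edge $\{i,j\}$. If $a_{ij} = 1$, the inner sum equals $N - s$; if $a_{ij} = 0$, it equals $s$. Hence choosing $a_{ij} = 1$ is optimal precisely when $N - s \le s$, i.e. when $s \ge N/2$, and choosing $a_{ij} = 0$ is optimal when $s \le N/2$. This is exactly the majority rule~(\ref{majority-rule}); in the tie case $s = N/2$ (only possible for even $N$) both choices are optimal, and the stated convention selects $a_{ij} = 1$, consistent with the blanket assumption that $\smd{\pr}$ is taken to be a single representative of the solution set.

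There is essentially no hard step here: the only thing to be careful about is justifying that the minimisation over graphs genuinely decouples into per-edge minimisations, which requires observing that the constraint set $\cS$ imposes no coupling between the off-diagonal entries above the diagonal (symmetry and the zero diagonal are automatically respected once we only vary $a_{ij}$ for $i < j$ and set $a_{ji} = a_{ij}$, $a_{ii} = 0$). Once that observation is in place, the result follows from the elementary scalar computation above, which also recovers the attribution to \cite{banks98}.
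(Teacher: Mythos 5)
Your proof is correct. The paper gives no proof of this lemma at all---it is stated with a bare citation to \cite{banks98}---and your argument (edge-wise decoupling of the Hamming objective over the unconstrained entries $a_{ij}$, $i<j$, followed by the scalar comparison of $s$ versus $N-s$, with the tie at $s=N/2$ resolved by the paper's single-representative convention) is exactly the standard derivation of the majority rule, correctly applied to the median objective (\ref{sample-frechet-median}) despite the statement's typographical reference to the ``mean'' graph.
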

We now come back to the second main contribution, where we prove that the sample \fr mean graph of $N$ independent random graphs
from $\gnP$ is asymptotically equal (for large sample size $N$) to the sample \fr median graph, with high probability.
\begin{theorem}
  \label{theorem2}
  $\forall \delta \in (0,1), \exists N_\delta, \forall N \ge N_\delta$,  $\smd{\bA}$ and $\sfm{\bA}$ are given by
  \begin{equation}
    \forall i,j \in \left[n\right], \quad 
    \Big[ \sfm{\bA} \Big]_{ij}  =
    \Big[ \smd{\bA} \Big]_{ij}  =
    \begin{cases}
      1 & \text{if} \quad  \E{\bA}_{ij} =\pij > 1/2,\\
      0 &  \text{otherwise,}
    \end{cases}
    \label{sampleFrechetMean}
  \end{equation}
  with probability $ 1- \delta$ over the realizations of the graphs $\left\{G^{(1)}, \ldots, G^{(N)}\right\}$ in $\gnP$. 
\end{theorem}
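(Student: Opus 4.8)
The plan is to establish the two claimed identities separately and then intersect the corresponding high-probability events. The \fr median part is the easy half: Lemma~\ref{lemma8} identifies $\smd{\bA}$ with the majority-rule graph, so $[\smd{\bA}]_{ij}=1$ exactly when $\frac1N\sum_{k=1}^N\akij\ge 1/2$. For a fixed edge $(i,j)$ the entries $\akij$, $1\le k\le N$, are i.i.d.\ $\BERN(\pij)$, so Hoeffding's inequality gives that $\frac1N\sum_k\akij$ deviates from $\pij$ by at least $|\pij-1/2|$ with probability at most $e^{-2N(\pij-1/2)^2}\le e^{-2N\gamma_0^2}$, where $\gamma_0\eqdef\min_{1\le i<j\le n}|\pij-1/2|>0$; on the complementary event $\one\big[\tfrac1N\sum_k\akij\ge 1/2\big]=\one[\pij>1/2]$. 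A union bound over the $\binom{n}{2}$ edges shows that $\smd{\bA}$ equals the right-hand side of (\ref{sampleFrechetMean}) with probability at least $1-\binom{n}{2}e^{-2N\gamma_0^2}$, which exceeds $1-\delta/2$ once $N$ is large enough.

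For the \fr mean, one might hope to compare $\sfm{\bA}$ with $\smd{\bA}$ edge by edge, but the \emph{squared} Hamming distance couples distinct edges through cross terms (this coupling is exactly why minimising the empirical \fr functional is NP-hard), and a direct estimate would amount to re-deriving Theorem~\ref{theorem1}. It is cleaner to \emph{use} Theorem~\ref{theorem1}: it already identifies the minimiser of the population functional, so it suffices to show the empirical functional is uniformly close to it. Write $F_N(G)\eqdef\frac1N\sum_{k=1}^N d_H^2(G,\Gk)$ and $\Phi(G)\eqdef\E{d_H^2(G,G')}=\sum_{G'\in\cG}d_H^2(G,G')\prob{G'}$, so $\sfm{\pr}=\argmin{G\in\cG}F_N(G)$, while Theorem~\ref{theorem1} gives $\argmin{G\in\cG}\Phi(G)=\{G^\ast\}$ with $[\bA^\ast]_{ij}=\one[\pij>1/2]$. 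Because $\cG$ is finite and this minimiser is unique (assuming no $\pij$ equals $1/2$), the gap $\varepsilon_0\eqdef\min_{G\ne G^\ast}\big(\Phi(G)-\Phi(G^\ast)\big)$ is strictly positive. For each fixed $G$ the variables $d_H^2(G,\Gk)$ are i.i.d., bounded in $[0,\binom{n}{2}^{2}]$, with mean $\Phi(G)$, so Hoeffding plus a union bound over the $2^{\binom{n}{2}}$ graphs of $\cG$ yields $\prob{\sup_{G\in\cG}|F_N(G)-\Phi(G)|\ge\varepsilon_0/3}\le 2^{1+\binom{n}{2}}\exp\!\big(-2N\varepsilon_0^2/(9\binom{n}{2}^{4})\big)$; on the complementary event $F_N(G)>\Phi(G)-\varepsilon_0/3\ge\Phi(G^\ast)+\tfrac23\varepsilon_0>F_N(G^\ast)$ for every $G\ne G^\ast$, so $\sfm{\bA}=\bA^\ast$. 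This event has probability $\ge 1-\delta/2$ once $N$ exceeds a suitable $N_\delta'$; taking $N_\delta$ to be the larger of $N_\delta'$ and the threshold from the previous paragraph, and noting the Hoeffding bounds are non-increasing in $N$, we obtain (\ref{sampleFrechetMean}) for all $N\ge N_\delta$ with probability at least $1-\delta$ on the intersection of the two events.

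The step that needs the most care — and the only genuine obstacle — is the boundary case $\pij=1/2$. When some edge probability equals $1/2$, both values of that entry incur the same \fr cost, the population \fr mean/median is no longer a singleton, the margins $\gamma_0$ and $\varepsilon_0$ collapse to $0$, and the majority rule reduces to a fair coin flip; so the theorem can hold only under the (implicit) hypothesis $\pij\ne1/2$ for all $i<j$, which is consistent with the paper's standing convention that the \fr minimisers are unique. Two minor remarks: the union bound over all $2^{\binom{n}{2}}$ graphs is harmless because $n$ is fixed, so the combinatorial factor is merely a constant absorbed into $N_\delta$ (the argument therefore says nothing about the dependence of $N_\delta$ on $n$, but the theorem only asserts existence); and the NP-hardness of actually computing $\sfm{\bA}$ is no obstruction here, since we never minimise $F_N$ — we only compare its value at $G^\ast$ with its values elsewhere via uniform closeness to $\Phi$.
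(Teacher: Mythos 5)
Your proof is correct and reaches the paper's conclusion by the same high-level strategy --- show the empirical \fr functional is uniformly close to the population one and invoke Theorem~\ref{theorem1} --- but the execution differs in three genuine ways. First, the concentration mechanism: the paper derives an exact algebraic identity for $\F[2]{\bB}$ in terms of the $\O{n^4}$ sample statistics $\sE{\aij}$ and $\sE{\roij}$ (Lemma~\ref{lemma4}), concentrates those via Hoeffding, and thereby gets a bound that holds simultaneously for all $\bB$ without ever enumerating $\cG$; you instead apply Hoeffding directly to the bounded variables $d_H^2(G,\Gk)$ and union-bound over all $2^{\binom{n}{2}}$ graphs. Your route is shorter and avoids Lemma~\ref{lemma4} entirely; the paper's buys a combinatorial factor polynomial in $n$ rather than exponential, which is irrelevant here since $n$ is fixed but would matter if one wanted $N_\delta$ uniform in $n$. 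Second, your introduction of the explicit gap $\varepsilon_0\eqdef\min_{G\ne G^\ast}(\Phi(G)-\Phi(G^\ast))>0$ and the three-epsilon comparison $F_N(G)>F_N(G^\ast)$ is a genuine improvement in rigor: the paper's final step merely says that for $N$ large the $\O{1/\sqrt{N}}$ term ``can be neglected,'' which is exactly the point your $\varepsilon_0$ argument makes precise (closeness of functionals does not by itself transfer argmins without a positive gap). Third, you prove the median half directly from the majority rule of Lemma~\ref{lemma8} plus a per-edge margin $\gamma_0$, whereas the paper handles it by an omitted concentration argument for $\F[1]{\bB}$ parallel to the mean case; both work. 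Your observation about the boundary case $\pij=1/2$ is also well taken --- there the gap and margin vanish, the population minimiser is not unique, and the statement as written can fail for that entry --- and it is consistent with, but more explicit than, the paper's standing convention that the \fr minimisers are singletons.
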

\begin{proof}
  The proof is given in section \ref{proofTheorem2}.
\end{proof}
The practical impact of Theorem~\ref{theorem2} is given by the following corollary, which is an elementary consequence of
theorem~\ref{theorem2} and lemma~\ref{lemma8}.
\begin{corollary}
  \label{corollary1}
  $\forall \delta \in (0,1), \exists N_\delta, \forall N\ge N_\delta$, the sample \fr mean graph $\sfm{\pr}$ is given by the majority rule
  \begin{equation}
    \forall i,j \in \left[n\right], \quad 
    \sfm{\bA}_{ij} =
    \begin{cases}
      1 & \text{if} \quad \sum_{k=1}^N \akij > N/2,\\
      0 &  \text{otherwise,}
    \end{cases}
    \label{practicalFrechetMean}
  \end{equation}
  with probability $ 1- \delta$ over the realizations of the graphs, $\left\{G^{(1)}, \ldots, G^{(N)}\right\}$, in $\gnP$.
\end{corollary}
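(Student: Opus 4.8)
The plan is to chain Theorem~\ref{theorem2}, which identifies $\sfm{\bA}$ with the sample \fr median matrix $\smd{\bA}$ with high probability, together with Lemma~\ref{lemma8}, which gives $\smd{\bA}$ in closed form through the non-strict majority rule $\sum_{k=1}^N \akij \ge N/2$. The only remaining point is that (\ref{practicalFrechetMean}) states the majority rule with the \emph{strict} inequality $\sum_{k=1}^N \akij > N/2$, so I will need to show that replacing ``$\ge$'' by ``$>$'' changes nothing with high probability, i.e.\ that exact ties are asymptotically negligible.

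Fix $\delta \in (0,1)$. First, apply Theorem~\ref{theorem2} with parameter $\delta/2$: there is $N_1$ such that for every $N \ge N_1$, with probability at least $1-\delta/2$ over $\{G^{(1)},\ldots,G^{(N)}\}$ one has $\sfm{\bA}_{ij} = \smd{\bA}_{ij}$ for all $i,j\in[n]$; on that event Lemma~\ref{lemma8} gives
\[
  \sfm{\bA}_{ij} \;=\; \smd{\bA}_{ij} \;=\; \mathbf{1}\Big\{\textstyle\sum_{k=1}^N \akij \ge N/2\Big\}, \qquad i,j \in [n].
\]
Second, I would bound the probability of a tie. For a fixed pair $(i,j)$ the count $\sum_{k=1}^N \akij$ is a sum of $N$ independent $\BERN(p_{ij})$ variables, so $\prob{\sum_{k=1}^N \akij = N/2}$ is a single binomial point mass; it is $0$ for $N$ odd, and for $N$ even it is $\O{1/\sqrt N}$ by Stirling's estimate of the central binomial coefficient (and even exponentially small when $p_{ij}\neq 1/2$, by a Chernoff bound). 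Since $n$ is fixed, a union bound over the at most $\binom{n}{2}$ pairs produces $N_2$ such that for all $N \ge N_2$, with probability at least $1-\delta/2$ we have $\sum_{k=1}^N \akij \neq N/2$ simultaneously for all $i,j$; on that event ``$\sum_{k=1}^N \akij \ge N/2$'' and ``$\sum_{k=1}^N \akij > N/2$'' are equivalent for every pair.

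Finally, set $N_\delta = \max\{N_1,N_2\}$. For $N \ge N_\delta$, intersect the two events above; by a union bound this intersection has probability at least $1-\delta$, and on it, for all $i,j\in[n]$,
\[
  \sfm{\bA}_{ij} \;=\; \smd{\bA}_{ij} \;=\; \mathbf{1}\Big\{\textstyle\sum_{k=1}^N \akij \ge N/2\Big\} \;=\; \mathbf{1}\Big\{\textstyle\sum_{k=1}^N \akij > N/2\Big\},
\]
which is precisely (\ref{practicalFrechetMean}). The main---indeed the only---obstacle is the tie estimate: in the boundary case $p_{ij}=1/2$ the point mass at $N/2$ decays only polynomially, like $1/\sqrt N$, so one must invoke Stirling rather than a large-deviation tail bound; but since $n$ and $\delta$ are fixed this polynomial rate comfortably suffices, and everything else is routine bookkeeping with union bounds.
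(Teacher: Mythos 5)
Your argument is correct and takes the same route as the paper, which offers no written proof at all beyond declaring the corollary ``an elementary consequence of Theorem~\ref{theorem2} and Lemma~\ref{lemma8}.'' The one step you add --- the Stirling/union-bound estimate showing that exact ties $\sum_{k=1}^N \akij = N/2$ occur with vanishing probability, which reconciles the non-strict inequality ``$\ge N/2$'' of Lemma~\ref{lemma8} with the strict inequality ``$> N/2$'' in (\ref{practicalFrechetMean}) --- addresses a discrepancy the paper passes over in silence, so your write-up is the more complete of the two.
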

This novel theoretical result has some significant practical consequences; consider for instance sparse graphs where
\mbox{$\min p_{ij} < 1/2$} (e.g., graphs with $\o{n^2}$ but $\omega(n)$ edges), then the sample \fr mean is the empty graph, and
is pointless. More generally, the sample \fr mean computed from a training set of graphs, which display specific topological
features of interest, will not inherit from the training set the desired topological structure.
\section{Simulation Studies
  \label{experiments}} We compare our theoretical analysis to finite sample estimates that were computed using numerical
simulations. All graphs were generated using the $\gnP$ model (\ref{laproba}). The sample \fr mean was computed using
(\ref{practicalFrechetMean}). All graphs had $n=512$ vertices. The sample size was $N =1,000$. The software used to
conduct the experiments is publicly available \cite{meyer22b}. We explored the switching of the mean graph from the
empty graph to the complete graph, as the entries in the probability matrix $\bP$ shift from being all less than $1/2$
to being all larger than $1/2$.

We investigate this shift over a large set of matrices $\bP$ that are generated at random. During the simulation, each
$\bP$ is created by populating its entries at random using independent (up to symmetry) beta random variables,
\begin{equation}
p_{ij} \sim \text{beta} (\nu,\omega),
\end{equation}
and thus the edge probability $p_{ij}$ becomes a random variable with mean and variance given by,
\begin{equation}
\E{p_{ij}} = \frac{\nu}{(\nu+\omega)}, \quad \text{and} \quad \var{p_{ij}} =
\frac{\nu\omega}{[(\nu+\omega)^2(\nu+\omega+1)]}.
\end{equation}
In our experiments, we fix $\nu + \omega=64$, and we let $\nu$ vary over the interval $(0,\nu+\omega)$, thereby
exploring the range $[0,1]$ for $\E{p_{ij}}$. This process makes it possible to probe the behaviour of $\sfm{\pr}$ over
a large range of possible edge connectivity: from the very sparse to the very dense graphs.\\

We denote by $\lvert \cE(\sfm{\pr}) \big \rvert$ the number of edges of the sample \fr mean graph
$\sfm{\pr}$. Fig.~\ref{fig3} displays $\lvert \cE(\sfm{\pr}) \big \rvert$ as a function of the mean edge density,
$\E{p_{ij}}$. We normalize $\big \lvert \cE(\sfm{\pr}) \big \rvert$ by dividing the number edges of $\sfm{\pr}$ by the
maximum possible number of edges,
\begin{equation}
\frac{2\big \lvert \cE(\sfm{\pr}) \big \rvert}{n(n+1)}.
\end{equation}
For each value of $\E{p_{ij}}$ on the $x$-axis in Fig.~\ref{fig3}, we generate 16 independent random realizations of the
$N=1,000$ sample random graphs in $\gnP$. We compute the average number of edges of $\sfm{\pr}$ over the 16 realizations,
and report this number on the $y$-axis of Fig.~\ref{fig3}.  Each curve corresponds to a different value of $\nu$. We
increase $\nu$ from $8$ to $64$ by $8$ each time. As $\nu$ increases, the variance on the distribution of the entries
$p_{ij}$ decreases, the curve in Fig.~\ref{fig3} becomes steeper, and the switch from the empty graph to the complete
graph becomes more sudden.
\begin{figure}[H]
  \centerline{
    \includegraphics[width=0.45\textwidth]{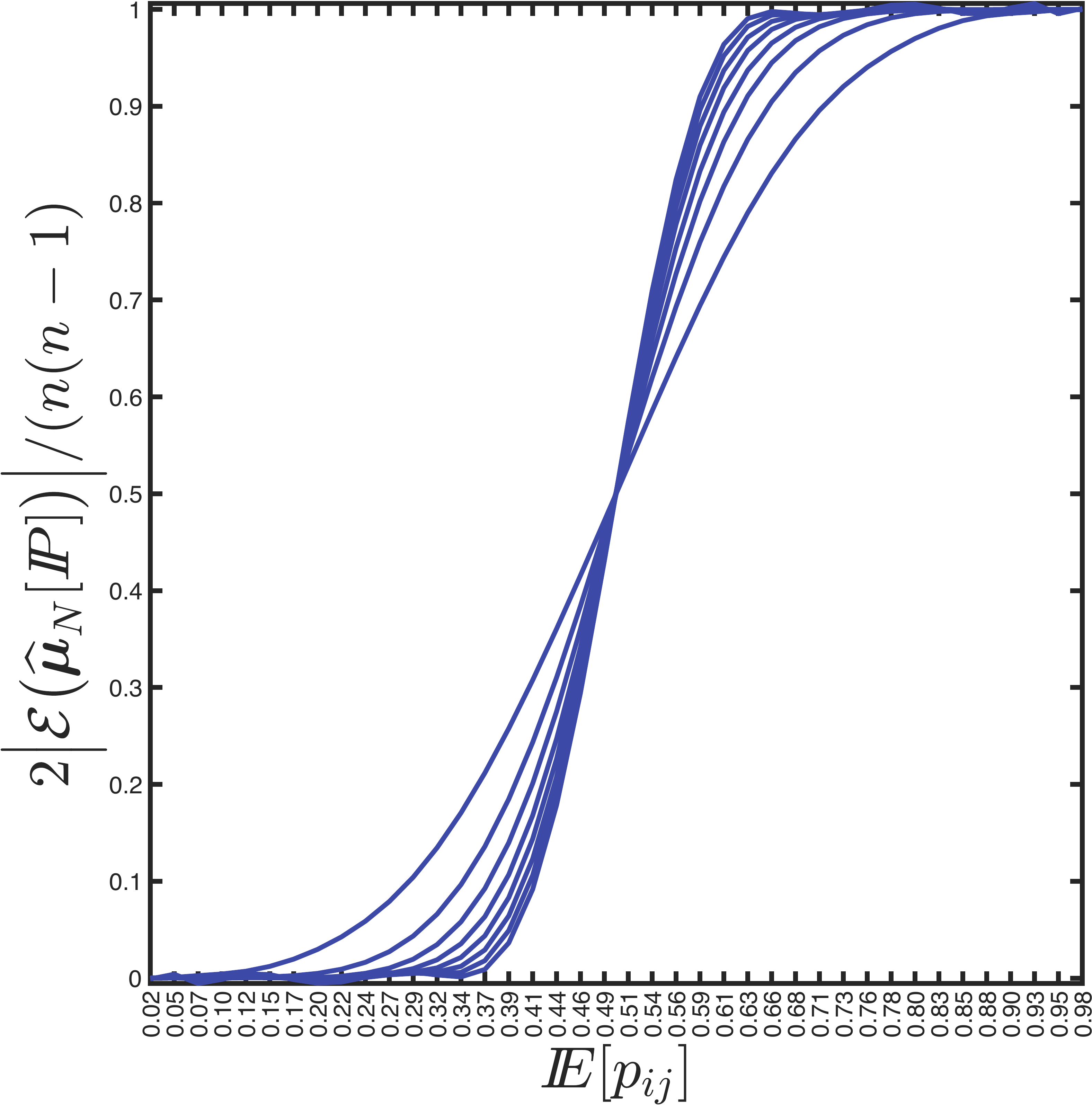}
  }
  \caption{Fraction  of the number of edges in $\sfm{\pr}$ as a  function of the location parameter $\E{p_{ij}} = \nu/64$.
The eight curves correspond to $\nu = 8, 16, 24, 32, 40, 48, 64$.
    \label{fig3}}
\end{figure}
\noindent The experiment confirms that the \fr mean of inhomogeneous \ER random graphs exhibits a sharp threshold: it
rapidly switches from the empty graph to the complete graph as the expected edge probability $\E{p_{ij}}$ becomes larger
than $1/2$.

\section{Discussion and Conclusion}
In this work we derived the expression for the population \fr mean for inhomogeneous \ER random graphs. We proved that the
sample \fr mean was consistent, and could be estimated using a simple thresholding rule. Our results have several practical
implications.

First, our work implies that the sample \fr mean computed from a training set of graphs, which display specific topological
features of interest, will not inherit from the training set the desired topological structure. Indeed, in the context of
inhomogeneous \ER random graphs, the (population or sample) \fr mean graph no longer captures the
edge density encoded by the edge probability.\\

Our answer to the question of the authors in \cite{lunagomez20}: ``what is the ``mean'' network (rather than how do we estimate
the success-probabilities of an inhomogeneous random graph), and do we want the ``mean'' itself to be a network?''  is therefore
disappointing in the context of the probability space $\gnP$. While the \fr mean is indeed an element of $\gnP$, it only
provides a simplistic sketch of that probability space. Consider for instance sparse graphs where \mbox{$\min p_{ij} < 1/2$}
(e.g., graphs with $\o{n^2}$ but $\omega(n)$ edges), then the sample \fr mean is the empty graph, and is pointless.\\

On a more positive note, our analysis provides a theoretical justification for several algorithms designed to recover a graph
from noisy measurements of its adjacency matrix. For instance, the authors in \cite{josephs21} devise a method to recover a
fixed network from unlabeled noisy samples. Instead of estimating the \fr mean, they compute the sample mean of the noisy
adjacency matrices, and threshold the sample mean to recover an unweighted graph.

Our results offer a theoretical justification of the approach of \cite{josephs21}, if one assumes that the noisy graphs are
aligned and are realizations of inhomogeneous \ER random graphs, with an unknown edge probability matrix $\bP$, then the
algorithm described in \cite{josephs21} recovers the sample \fr mean graph.

\section{Proofs of the main results}
We give in the following the proofs of theorems 1 and 2. In the process, we prove several technical lemmata.
\subsection{The \fr functions for the Hamming distance}
We define the \fr functions, associated with the \fr median and mean, and  the sample \fr mean and median.
\begin{definition}
  We denote by $F_q$ the \fr function associated with the \fr median ($q=1$) or mean ($q=2$),
  \begin{equation}
    F_q(\bB)  =  \sum_{\bA \in \cS} d^q_H(\bA, \bB) \prob{\bA}.
  \end{equation}
  We denote by $\widehat{F}_q$ the sample \fr function associated with the sample \fr median ($q=1$) or mean ($q=2$), 
  \begin{equation}
    \F[q]{\bB}  =  \frac{1}{N} \sum_{k=1}^N d^q_H(\bAk, \bB).
  \end{equation}
\end{definition}
Let $\bA$ and $\bB$ be two adjacency matrices in $\cS$. We derive in the following lemma an expression for the Hamming distance
squared, $d^2_H(\bA, \bB)$ where the computation is split between the entries of $\bA$ along the edges of $\bB$, $\cE(\bB)$, and
the entries of $\bA$ along the ``nonedges'' of $\bB$, $\nE(\bB)$. We denote by $\iplus$ the number of edges in $\bB$.
\begin{lemma}
  \label{F2ofBlemma}  
  Let $\bA$ and $\bB$ two matrices in $\cS$. Then,
  \begin{align}
    d^2_H(\bA,\bB) = & \bigg[\sum_{1 \leq i < j \leq n}\mspace{-16mu} \aij \bigg]^2 \mspace{-8mu} + \iplus^2
                       + 2 \iplus
                       \bigg[
                       \sum_{(i,j) \in \nE(\bB)}\mspace{-16mu} \aij  - \mspace{-16mu}
                       \sum_{(i,j) \in \cE(\bB)} \mspace{-16mu} \aij
                       \bigg] \notag\\
                     & - 4  \mspace{-8mu} \sum_{(i,j) \in \cE(\bB)}\mspace{-2mu} \sum_{(i^\prime,j^\prime) \in \nE(\bB)}
                       \mspace{-16mu} \aij \aijp \label{F2ofB}
  \end{align}
\end{lemma}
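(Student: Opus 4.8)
The plan is to reduce the whole computation to three nonnegative integers and then expand a single square. First I would use the fact that all entries of $\bA$ and $\bB$ lie in $\{0,1\}$, so that $|\aij - \bij| = \aij + \bij - 2\aij\bij$; concretely, for a pair $(i,j)\in\cE(\bB)$ this equals $1-\aij$, while for $(i,j)\in\nE(\bB)$ it equals $\aij$. Partitioning the index set $\{(i,j):1\le i<j\le n\}$ into $\cE(\bB)$ and its complement $\nE(\bB)$ then gives
\[
 d_H(\bA,\bB) = \sum_{(i,j)\in\nE(\bB)}\aij + \sum_{(i,j)\in\cE(\bB)}(1-\aij) = y + (m - x),
\]
where I abbreviate $m \eqdef \iplus$, $x \eqdef \sum_{(i,j)\in\cE(\bB)}\aij$, and $y \eqdef \sum_{(i,j)\in\nE(\bB)}\aij$, so that $x+y = \sum_{1\le i<j\le n}\aij$ is the total number of edges of $\bA$.

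Next I would simply square this identity: $d_H^2(\bA,\bB) = (m + y - x)^2 = x^2 + y^2 + m^2 + 2my - 2mx - 2xy$. To recognize the right-hand side of (\ref{F2ofB}) I would use two elementary observations. First, since $(i,j)$ ranges over $\cE(\bB)$ and $(i^\prime,j^\prime)$ ranges over the disjoint set $\nE(\bB)$, the double sum factorizes, $\sum_{(i,j)\in\cE(\bB)}\sum_{(i^\prime,j^\prime)\in\nE(\bB)}\aij\aijp = xy$. Second, $\big[\sum_{1\le i<j\le n}\aij\big]^2 = (x+y)^2 = x^2 + 2xy + y^2$, hence $x^2 + y^2 = \big[\sum_{1\le i<j\le n}\aij\big]^2 - 2xy$. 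Substituting this into the expanded square and collecting the $xy$ terms yields
\[
 d_H^2(\bA,\bB) = \Big[\textstyle\sum_{1\le i<j\le n}\aij\Big]^2 + m^2 + 2m(y-x) - 4xy,
\]
and rewriting $m$, $y-x$, and $xy$ in their summation forms (using the factorization for $xy$) is precisely (\ref{F2ofB}).

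There is essentially no obstacle here: the statement is a purely algebraic identity valid for arbitrary $\bA,\bB\in\cS$, with no probabilistic input and no use of any property of $\bP$. The only two points deserving a moment of care are the disjointness of $\cE(\bB)$ and $\nE(\bB)$ — which is what makes the cross term factorize cleanly and produces the $-4xy$ coefficient — and the sign bookkeeping in expanding $(m + y - x)^2$. Because the identity holds for every pair of adjacency matrices, it can subsequently be averaged against $\prob{\bA}$ and against the empirical measure, which is exactly how it feeds into the proofs of Theorems~\ref{theorem1} and~\ref{theorem2}.
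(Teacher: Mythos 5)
Your proof is correct: writing $d_H(\bA,\bB)=m+y-x$ with $m=\iplus$, $x=\sum_{(i,j)\in\cE(\bB)}\aij$, $y=\sum_{(i,j)\in\nE(\bB)}\aij$, expanding the square, and using $(x+y)^2=\big[\sum_{1\le i<j\le n}\aij\big]^2$ together with the factorization of the cross term reproduces (\ref{F2ofB}) exactly. The paper omits this proof as elementary, and your argument is precisely the intended direct computation, so there is nothing to reconcile.
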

The proof of (\ref{F2ofB}) is elementary, and is omitted. As explained in the next lemma, the values $F_1(\bB)$ and
$F_2(\bB)$ depend only on the entries of the probability matrix $\bP$ along edges of $\bB$.
\begin{lemma}
  \label{lemma1}
  Let $\bB \in \cS$, let $\cE(\bB)$ be the set of edges of the graph associated to $\bB$. Then
  \begin{equation}
    F_1(\bB) =
    \sum_{(i,j) \in \cE(\bB)} \mspace{-12mu} (1-2 p_{ij}) 
    + \mspace{-8mu}\sum_{1 \le i < j \le n} \mspace{-12mu} p_{ij}.
    \label{frechetMedianOpt}
  \end{equation}
  \begin{equation}
    F_2(\bB) = \left[
      \sum_{(i,j) \in \cE(\bB)} \left( 1 - 2 p_{ij} \right)
      + \mspace{-12mu} \sum_{1\le i < j \le n} \mspace{-8mu} p_{ij}
    \right]^2
    + \sum_{1 \leq i < j \leq n} p_{ij}(1 - p_{ij}).
    \label{frechetMeanOpt}
  \end{equation}
\end{lemma}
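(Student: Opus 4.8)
The plan is to evaluate both Fréchet functions directly, using only the linearity of expectation and the fact that, under $\pr$, the entries $\aij$ are independent (up to symmetry) Bernoulli variables; no use of the minimisation structure is needed, since these are just identities for $F_1(\bB)$ and $F_2(\bB)$ at a fixed $\bB$.

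\emph{The median function.} First I would write $F_1(\bB) = \E{d_H(\bA,\bB)} = \sum_{1\le i<j\le n}\E{\lvert \aij - b_{ij}\rvert}$. Since $\aij \sim \BERN(\pij)$, the summand is $\pij$ when $b_{ij}=0$, i.e. $(i,j)\in\nE(\bB)$, and $1-\pij$ when $b_{ij}=1$, i.e. $(i,j)\in\cE(\bB)$. Hence $F_1(\bB) = \sum_{(i,j)\in\nE(\bB)}\pij + \sum_{(i,j)\in\cE(\bB)}(1-\pij)$, and substituting $\sum_{\nE(\bB)}\pij = \sum_{i<j}\pij - \sum_{\cE(\bB)}\pij$ and collecting the $\cE(\bB)$ terms gives (\ref{frechetMedianOpt}).

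\emph{The mean function.} Rather than grind through the expectation of the expanded identity (\ref{F2ofB}) in Lemma~\ref{F2ofBlemma}, I would use the variance decomposition $F_2(\bB) = \E{d_H^2(\bA,\bB)} = \big(\E{d_H(\bA,\bB)}\big)^2 + \var{d_H(\bA,\bB)} = F_1(\bB)^2 + \var{d_H(\bA,\bB)}$. Because $d_H(\bA,\bB) = \sum_{i<j}\lvert \aij - b_{ij}\rvert$ is a sum of \emph{independent} terms, and $\lvert \aij - b_{ij}\rvert$ is Bernoulli with parameter $\pij$ or $1-\pij$ according to the value of $b_{ij}$ — hence with variance $\pij(1-\pij)$ in either case — one gets $\var{d_H(\bA,\bB)} = \sum_{1\le i<j\le n}\pij(1-\pij)$, which conveniently does not depend on $\bB$. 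Plugging in the expression for $F_1(\bB)$ from the previous step yields (\ref{frechetMeanOpt}).

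For completeness one can instead take expectations in (\ref{F2ofB}) term by term: the square $\big(\sum_{i<j}\aij\big)^2$ contributes $\big(\sum\pij\big)^2 + \sum\pij(1-\pij)$, and in the double sum $\sum_{(i,j)\in\cE(\bB)}\sum_{(i',j')\in\nE(\bB)}\aij\aijp$ the index sets $\cE(\bB)$ and $\nE(\bB)$ are disjoint, so each product factorises and the term equals $\big(\sum_{\cE(\bB)}\pij\big)\big(\sum_{\nE(\bB)}\pijp\big)$; completing the square (with $\iplus = \lvert\cE(\bB)\rvert$) then reproduces (\ref{frechetMeanOpt}). I do not expect a real obstacle in either route; the one point to watch in the second is exactly that $\cE(\bB)$ and $\nE(\bB)$ partition $\{(i,j):1\le i<j\le n\}$, so no diagonal terms $\aij^2$ survive and every expectation genuinely factorises — which is the same independence fact that the first route packages as ``variance of a sum of independent variables.''
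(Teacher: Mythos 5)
Your proof is correct, and your main route for (\ref{frechetMeanOpt}) is genuinely different from the paper's. The paper first expands $d_H^2(\bA,\bB)$ combinatorially (Lemma~\ref{F2ofBlemma}) and then takes the expectation of each of the four resulting terms, using $\E{\aij}=\pij$ and the factorisation $\E{\aij\aijp}=\pij\pijp$ for distinct index pairs, before recombining everything into a perfect square; your ``for completeness'' paragraph is essentially a sketch of that computation. Your primary argument instead applies $\E{X^2}=(\E{X})^2+\var{X}$ to $X=d_H(\bA,\bB)=\sum_{i<j}\lvert \aij-b_{ij}\rvert$, a sum of independent Bernoulli variables each with variance $\pij(1-\pij)$ regardless of the value of $b_{ij}$, so that $F_2(\bB)=F_1(\bB)^2+\sum_{i<j}\pij(1-\pij)$ follows at once from the formula for $F_1$ (which you prove explicitly, whereas the paper only asserts it as the ``simpler'' case). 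Your route is shorter and more conceptual: it makes transparent both why the additive term $\sum_{i<j}\pij(1-\pij)$ is independent of $\bB$ and why $F_1$ and $F_2$ share the same minimiser, which is precisely the structural fact that Theorem~\ref{theorem1} rests on. What the paper's term-by-term expansion buys in exchange is reusability: the same decomposition is recycled for the sample \fr function in Lemma~\ref{lemma4}, where the empirical cross-moments $\sE{\roij}$ no longer factorise exactly, the empirical ``variance'' retains a genuine dependence on $\bB$, and the clean shortcut is therefore unavailable.
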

\begin{proof}
  We use lemma~\ref{F2ofBlemma}, and take the expectation with respect to the probability measure $\pr$, on both sides of
  equation (\ref{F2ofB})
\begin{align}
  \sum_{\bA \in \cS} d^2_H(\bA,\bB)\prob{\bA} =
  & \sum_{\bA \in \cS} \left[\sum_{1 \leq i < j \leq n}\aij \right]^2  \mspace{-12mu} \prob{\bA}
    + \iplus^2  \label{square}\\
  & - 4  \mspace{-8mu}
    \sum_{(i,j) \in \cE(\bB)}\sum_{(i^\prime,j^\prime) \in \nE(\bB)}
    \sum_{\bA \in \cS} \aij \aijp \prob{\bA} \label{cross1}\\
  & + 2 \iplus  \left[
    \sum_{(i,j) \in \nE(\bB)} \sum_{\bA \in \cS} \aij \prob{\bA}
    - \sum_{(i,j) \in \cE(\bB)} \sum_{\bA \in \cS} \aij \prob{\bA}
    \right] \label{cross2}.
\end{align}
Now, the expectation of $\aij$ is given by
\begin{equation}
  \E{\aij} = \sum_{\bA \in \cS} \aij \prob{\bA} = p_{ij}, \label{expectaij}
\end{equation}
and because the edges $(i,j) \in \cE(\bB)$ and $(i^\prime,j^\prime) \in \nE(\bB)$ are independent,
\begin{equation}
  \E{\aij \aijp}  = \sum_{\bA \in \cS} \aij \aijp \prob{\bA} = p_{ij} p_{i^\prime j^\prime}.
\end{equation}
Therefore (\ref{cross1}) becomes
\begin{equation}
  - 4  \mspace{-8mu}\sum_{(i,j) \in \cE(\bB)}\sum_{(i^\prime,j^\prime) \in \nE(\bB)}
  \sum_{\bA \in \cS} \aij \aijp \prob{\bA}
  =  - 4  
      \left[\sum_{(i,j) \in \cE(\bB)}p_{ij} \right]
      \left[\sum_{(i^\prime,j^\prime) \in \nE(\bB)} p_{i^\prime j^\prime} \right]
      \label{cross1b}
    \end{equation}
Using (\ref{expectaij}) one more time, (\ref{cross2}) is equal to,
\begin{align}
  2 \iplus  & \left[\sum_{(i,j) \in \nE(\bB)} \sum_{\bA \in \cS} \aij \prob{\bA}
              - \sum_{(i,j) \in \cE(\bB)} \sum_{\bA \in \cS} \aij \prob{\bA}\right]\\
  =  &2 \iplus 
       \left[\sum_{(i,j) \in \nE(\bB)} p_{ij} \right]
       - 2 \iplus
       \left[\sum_{(i,j) \in \cE(\bB)} p_{ij}\right] \label{cross2b}
\end{align}
The first term  in the righthand side of (\ref{square}) can be evaluated to give
\begin{equation}
  \sum_{\bA \in \cS} \left[\sum_{1 \leq i < j \leq n}\aij \right]^2 \prob{\bA}
  =  \sum_{1 \leq i < j \leq n} \mspace{4mu}
      \sum_{1 \leq i^\prime < j^\prime \leq n} \mspace{4mu}
      \sum_{\bA \in \cS} \aij \; \aijp \prob{\bA},
    \end{equation}
    where
\begin{equation}
  \sum_{\bA \in \cS} \aij \aijp \prob{\bA} =
  \E{\aij \aijp} = 
  \begin{cases}
    \E{\aij}\E{\aijp} = p^2_{ij} & \text{if $(i,j) \neq (i^\prime,j^\prime)$,}\\
    \E{\aij^2} = \E{\aij} = p_{ij}& \text{if $(i,j) = (i^\prime,j^\prime)$.}
  \end{cases}
\end{equation}
We conclude that
\begin{equation}
  \sum_{\bA \in \cS} \left[\sum_{1 \leq i < j \leq n}\aij \right]^2 \mspace{-12mu}\prob{\bA}
  = \mspace{-12mu}  \sum_{1 \leq i < j \leq n} \mspace{4mu}
  \sum_{\stackrel{1 \leq i^\prime < j^\prime \leq n}{(i,j) \neq (i^\prime,j^\prime)}} \mspace{-16mu}
  p^2_{ij} + \mspace{-8mu} \sum_{1 \leq i < j \leq n} \mspace{-8mu} p_{ij}
  =  \left[\sum_{1 \leq i < j \leq n} \mspace{-8mu} p_{ij}\right]^2
  \mspace{-8mu} + \mspace{-12mu} 
  \sum_{1 \leq i < j \leq n} \mspace{-8mu} p_{ij}(1 - p_{ij}). \label{squareb}
\end{equation}
We can substitute (\ref{cross1b}), (\ref{cross2b}), and (\ref{squareb}) into (\ref{cross1}), (\ref{cross2}), and
(\ref{square}) respectively, and we get
\begin{align}
  \sum_{\bA \in \cS} d^2_H(\bA,\bB) \prob{\bA}
  = & \left[\sum_{1 \leq i < j \leq n} p_{ij}\right]^2 
      + 2 \left[ \sum_{1 \leq i < j \leq n} p_{ij}\right]
      \left[\sum_{(i,j) \in \cE(\bB)} \mspace{-12mu} (1 -2p_{ij}) \right] \notag \\
  + & \left[\sum_{(i,j) \in \cE(\bB)} \mspace{-12mu} (1 -2p_{ij})\right]^2
      +   \sum_{1 \leq i < j \leq n} p_{ij}(1 - p_{ij}) \notag \\
  = & \left[\sum_{(i,j) \in \cE(\bB)} \mspace{-12mu} (1 -2p_{ij})
      + \sum_{1 \leq i < j \leq n} \mspace{-12mu} p_{ij}\right]^2
      + \sum_{1 \leq i < j \leq n} p_{ij}(1 - p_{ij}),
\end{align}
which matches the expression (\ref{frechetMeanOpt}).  
\end{proof}
\subsection{The Fr\'echet mean and median of $\mathcal{G}\big(n,\mathbf{P}\big)$: proof of Theorem~\ref{theorem1}
  \label{proofTheorem1}}
We prove theorem~\ref{theorem1} for the mean. The proof for the median is very similar; it is in fact simpler and is therefore
omitted. By lemma~\ref{lemma1}, we seek the matrix $\bB$, with edge set $\cE(\bB)$, that minimizes the \fr function defined by
(\ref{frechetMeanOpt}). Let us denote
  \begin{equation}
    x \eqdef \mspace{-12mu} \sum_{(i,j) \in \cE(\bB)} \left( 1 - 2 p_{ij} \right). \label{whatisx}
  \end{equation}
The variable $x$ is clearly a function of $\bB$, and $\bP$. To avoid unnecessary complicated notations, we keep these
dependencies implicit in our exposition. Since $0 \le p_{ij} \le 1$, $x$ is confined to the following interval,
  \begin{equation}
    - \mspace{-12mu} \sum_{1 \le i < j \le n} p_{ij} 
    \le
    - \mspace{-12mu} \sum_{(i,j) \in \cE(\bB)}  p_{ij}
    \le
    x
    \le 
    \sum_{(i,j) \in \cE(\bB)} \mspace{-16mu} 1
    \mspace{8mu} \le n(n-1)/2.
  \end{equation} 
  In fact, $x = -\sum_{1 \le i < j \le n} p_{ij}$, only if $\forall i,j \in [n], \; \pij =1$, and the graph associated
  to $\bB$ in (\ref{whatisx}) is the complete graph. This case is of no interest to us, and thus we can assume that
  $\bP$ is always chosen such that
\begin{equation}
  -\mspace{-16mu}\sum_{1 \le i < j \le n} p_{ij} < x.
  \label{leftBorder}
\end{equation}
We define,
\begin{equation*}
f(x) \eqdef \left[x + \sum_{1\le i < j \le n} p_{ij} \right]^2,
\end{equation*}
and we have
\begin{equation*}
f(x) = F_2(\bB) - \sum_{1 \leq i < j \leq n} p_{ij}(1 - p_{ij}).
\end{equation*}
Minimizing $F_2$ is therefore equivalent to minimizing $f$. Clearly, $f(x)$ is convex, has a global minimum at
$x_{\min{}} = -\sum_{1\le i< j \le n} p_{ij}$, and is increasing for $x \ge -\sum_{1\le i< j \le n} p_{ij}$. We seek $x^*$ that
minimizes $f(x)$ over the interval wherein $x$ is enclosed,
\begin{equation*}
\bigg (-\sum_{1\le i< j \le n} p_{ij}, \; n(n-1)/2 \bigg].
\end{equation*}
We note that because of (\ref{leftBorder}), $x_{\min{}} < x^\ast$. Also, $x^*$ cannot be positive; otherwise, we would get
$f(x^*) > f(0)$. The optimal value $x^*$ is obtained by minimizing the distance from $x^*$ to $-\sum_{1\le i< j \le n} p_{ij}$,
\begin{equation}
  x^\ast -  (- \mspace{-16mu}
  \sum_{1\le i< j \le n} p_{ij})
  =
  \mspace{-24mu}
  \sum_{(i,j) \in \cE(\bB)} \mspace{-16mu} (1 - 2 p_{ij})
  +
  \mspace{-16mu}
  \sum_{1\le i< j \le n} \mspace{-8mu} p_{ij}
  \ge
  \mspace{-24mu}
  \sum_{(i,j); 1 -2p_{ij} < 0} \mspace{-24mu} \left( 1 - 2 p_{ij} \right)
  +
  \mspace{-16mu}
  \sum_{1\le i< j \le n} p_{ij} \label{lowerbound}.
\end{equation}
The lower bound (\ref{lowerbound}) is independent of $\bB$, and can be obtained by choosing, 
\begin{equation}
  \fm{\bA}_{ij} =
  \begin{cases}
    1 & \text{if}\quad p_{ij} > 1/2,\\
    0 & \text{otherwise,}
  \end{cases}
\end{equation}
as advertised in the theorem. \qed
\subsection{The sample \fr functions for the Hamming distance}
We now consider $N$ independent random graphs, $\left\{ \Gk \right\}_{1\leq k \leq N}$, sampled from $\gnP$, with adjacency
matrices $\bAk$. The sample \fr functions associated with the sample \fr mean and median graph are given by the next lemma,
which corresponds to the sample version of lemma~\ref{lemma1}.
\begin{lemma}
  \label{lemma4}
  Let $\bB \in \cS$, let $\cE(\bB)$ be the set of edges of the graph associated to $\bB$. Then
  \begin{equation}
    \F [1]{\bB} =      \sum_{(i,j) \in \cE(\bB)} \left( 1 - 2 \sE{\aij} \right)
    + \mspace{-12mu} \sum_{1\le i < j \le n} \mspace{-8mu} \sE{\aij}
    \label{F1_sample}
  \end{equation}
  \begin{align}
    \F[2]{\bB}   =  & \left[
                      \sum_{(i,j) \in \cE(\bB)}  \mspace{-8mu} \left (1 - 2\sE{a_{ij}}\right) + \mspace{-8mu} \sum_{1 \le i < j \le n} \sE{a_{ij}}
                      \right]^2
                      \mspace{-12mu}  + \mspace{-12mu}
                      \sum_{1 \le i < j \le n} \sE{\aij} \left(1 - \sE{\aij} \right) \notag \\
                    & \mspace{16mu}
                      - \sum_{1 \le i < j \le n} \mspace{8mu}
                      \sum_{\stackrel{1 \le i^\prime < j^\prime \le n }{(i,j) \neq (i^\prime,j^\prime)}}
                      \left( \sE{\aij}  \sE{\aijp} - \sE{\roij}
                      \right) \notag\\
                    & +  4 \sum_{(i,j) \in \cE(\bB)} \sum_{(i^\prime,j^\prime) \in \nE(\bB)}
                      \left( \sE{\aij} \sE{\aijp} - \sE{\roij}
                      \right) \label{F2_sample} 
  \end{align}
  where the sample mean and sample correlation are defined by
  \begin{equation}
    \sE{a_{ij}} = \frac{1}{N} \sum_{k=1}^N \akij \quad \text{and} \quad \sE{\roij} = \frac{1}{N} \sum_{k=1}^N \akij \akijp
    \label{thesamples}
  \end{equation}
\end{lemma}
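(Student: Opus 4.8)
The plan is to mirror the proof of Lemma~\ref{lemma1} step for step, replacing the population expectation $\sum_{\bA\in\cS}(\cdot)\,\prob{\bA}$ everywhere by the empirical average $\tfrac1N\sum_{k=1}^N(\cdot)$, and then to carry along the finite-sample residuals that the population argument was able to discard by independence. I would first dispose of (\ref{F1_sample}), which requires no probabilistic input at all. Writing $\widehat F_1(\bB)=\tfrac1N\sum_k d_H(\bAk,\bB)$ and splitting each $d_H(\bAk,\bB)=\sum_{1\le i<j\le n}\lvert\akij-b_{ij}\rvert$ according to whether $(i,j)\in\cE(\bB)$ (where $b_{ij}=1$ and $\lvert\akij-1\rvert=1-\akij$) or $(i,j)\in\nE(\bB)$ (where the term is simply $\akij$), and using $\sum_{(i,j)\in\nE(\bB)}\akij=\sum_{1\le i<j\le n}\akij-\sum_{(i,j)\in\cE(\bB)}\akij$, gives $d_H(\bAk,\bB)=\sum_{(i,j)\in\cE(\bB)}(1-2\akij)+\sum_{1\le i<j\le n}\akij$. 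Averaging over $k$ and recognising $\tfrac1N\sum_k\akij=\sE{a_{ij}}$ yields (\ref{F1_sample}); this is also the computation behind Lemma~\ref{lemma8}.

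For (\ref{F2_sample}) I would apply Lemma~\ref{F2ofBlemma} to each $\bAk$ and average over $k$, so that $\widehat F_2(\bB)$ is the empirical mean of the right-hand side of (\ref{F2ofB}). The deterministic term $\lvert\cE(\bB)\rvert^2$ stays put, and the linear-in-$\bA$ bracket becomes $2\lvert\cE(\bB)\rvert\big(\sum_{(i,j)\in\nE(\bB)}\sE{a_{ij}}-\sum_{(i,j)\in\cE(\bB)}\sE{a_{ij}}\big)$, exactly as in Lemma~\ref{lemma1}. The two quadratic pieces are where the sample proof departs from the population one: for both $\big(\sum_{1\le i<j\le n}\akij\big)^2=\sum_{i<j}\sum_{i'<j'}\akij\akijp$ and the cross term $-4\sum_{\cE(\bB)}\sum_{\nE(\bB)}\akij\akijp$, the empirical average of a product is $\sE{\roij}$, which is \emph{not} equal to $\sE{a_{ij}}\sE{a_{i'j'}}$; I would substitute $\sE{\roij}=\sE{a_{ij}}\sE{a_{i'j'}}-\big(\sE{a_{ij}}\sE{a_{i'j'}}-\sE{\roij}\big)$. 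The ``product of means'' halves combine with $\lvert\cE(\bB)\rvert^2$ and the linear terms into the perfect square $\big(\sum_{(i,j)\in\cE(\bB)}(1-2\sE{a_{ij}})+\sum_{1\le i<j\le n}\sE{a_{ij}}\big)^2$ by precisely the algebra used in the proof of Lemma~\ref{lemma1} with $p_{ij}$ replaced by $\sE{a_{ij}}$, while the leftover parts become the two residual sums of $\big(\sE{a_{ij}}\sE{a_{i'j'}}-\sE{\roij}\big)$ over $(i,j)\ne(i',j')$ and over $\cE(\bB)\times\nE(\bB)$, respectively. One still has to peel off the diagonal $(i,j)=(i',j')$ of $\sum_{i<j}\sum_{i'<j'}\akij\akijp$, where $\big(\akij\big)^2=\akij$ since the entries are in $\{0,1\}$, hence $\sE{\rho_{ij,ij}}=\sE{a_{ij}}$; that diagonal contribution produces the $\sum_{1\le i<j\le n}\sE{a_{ij}}(1-\sE{a_{ij}})$ term, the sample analogue of the variance term appearing in (\ref{frechetMeanOpt}).

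The calculation is entirely routine. The only genuine difference from Lemma~\ref{lemma1} is that independence no longer collapses $\sE{\roij}$ to $\sE{a_{ij}}\sE{a_{i'j'}}$, so the non-vanishing residuals must be tracked rather than dropped. The main obstacle is therefore purely one of bookkeeping: keeping the index sets straight --- diagonal versus off-diagonal in the double sum over all pairs, and the restricted product $\cE(\bB)\times\nE(\bB)$ versus the full double sum --- so that the correction terms land with the signs and ranges displayed in (\ref{F2_sample}).
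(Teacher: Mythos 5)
Your proposal is correct and follows essentially the same route as the paper: average the pointwise identity of Lemma~\ref{F2ofBlemma} over the sample, complete the square in the ``product of means'' so that the non-factoring of $\sE{\roij}$ shows up as the two residual sums, and peel off the diagonal $(i,j)=(i^\prime,j^\prime)$ using $\big(\akij\big)^2=\akij$ to produce the $\sum\sE{\aij}\big(1-\sE{\aij}\big)$ term. The only cosmetic difference is that you add and subtract $\sE{\aij}\sE{\aijp}$ where the paper invokes the explicit square-completion identity; the bookkeeping is otherwise identical.
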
    
\begin{proof}
  We prove (\ref{F2_sample}), the proof of (\ref{F1_sample}) is much simpler and is omitted.  The proof of (\ref{F2_sample}) is
  similar to the proof of lemma~\ref{lemma1}.  For each graph $\Gk$, we apply equation (\ref{F2ofB}), we sum over all the
  graphs in the sample, and divide by $N$ to get
\begin{align}
  \F[2]{\bB}
  = &  \iplus^2
      + 2  \iplus
      \left[\sum_{(i,j) \in \nE(\bB)} \frac{1}{N} \sum_{k=1}^N \akij
      -
      \sum_{(i,j) \in \cE(\bB)}  \frac{1}{N} \sum_{k=1}^N \akij
      \right]\\
  + &  \frac{1}{N} \sum_{k=1}^N \left[\sum_{1 \leq i < j \leq n}\akij \right]^2
      -  4  \sum_{(i,j) \in \nE(\bB)}\sum_{(i^\prime,j^\prime) \in \cE(\bB)} \left[\frac{1}{N} \sum_{k=1}^N \akij \akijp \right]\\
\end{align}
Using the expressions for the sample mean and correlation, in (\ref{thesamples}), we get 
\begin{align}
  \F[2]{\bB}
  = &  \iplus^2
      +
      2  \iplus \Big [\mspace{-16mu} \sum_{(i,j) \in \nE(\bB)} \sE{\aij}  - \sum_{(i,j) \in \cE(\bB)}  \sE{\aij} \Big] 
      + \frac{1}{N} \sum_{k=1}^N \Big [\mspace{-8mu} \sum_{1 \leq i < j \leq n}\akij \Big]^2 \notag \\
  -&  4  \sum_{(i,j) \in \nE(\bB)}\sum_{(i^\prime,j^\prime) \in \cE(\bB)} \mspace{-16mu} \sE{\roij} \label{Thenwehave}
\end{align}
We note that
\begin{equation}
  \frac{1}{N} \sum_{k=1}^N \Big[\mspace{-8mu} \sum_{1 \leq i < j \leq n}\akij \Big]^2
  =   \mspace{-12mu} \sum_{1 \le i < j \le n}\; \sum_{1 \le i^\prime <j^\prime \le n}  \frac{1}{N} \sum_{k=1}^N \akij \akijp
  =   \mspace{-12mu}  \sum_{1 \le i < j \le n}\; \sum_{1 \le i^\prime <j^\prime \le n}  \mspace{-16mu} \sE{\roij} \label{sommecarres}
\end{equation}
Also, we have
\begin{align}
  \iplus^2
  & + 2  \iplus   \bigg[\mspace{-4mu} \sum_{(i,j) \in \nE(\bB)} \sE{\aij}  - \sum_{(i,j) \in \cE(\bB)}  \sE{\aij} \bigg] \notag \\
  = & \bigg[ \iplus - 2 \mspace{-12mu} \sum_{(i,j) \in \cE(\bB)} \sE{\aij} \bigg]^2
      - \mspace{-12mu} \sum_{1 \le i < j \le n}\; \sum_{1 \le i^\prime <j^\prime \le n}  \mspace{-16mu} \sE{\aij}  \sE{\aijp} \notag \\
  & +   4   \mspace{-12mu}\sum_{(i,j) \in \nE(\bB)}\sum_{(i^\prime,j^\prime) \in \cE(\bB)}
    \mspace{-16mu} \sE{\aij}\sE{\aijp} \label{lecarre}
\end{align}
We can then substitute (\ref{sommecarres}) and (\ref{lecarre}) into (\ref{Thenwehave}), and we get
\begin{align}
  \F[2]{\bB}
  & = \bigg[ \iplus - 2 \mspace{-12mu} \sum_{(i,j) \in \cE(\bB)} \sE{\aij} \bigg]^2 
    \mspace{-8mu} - \mspace{-16mu} \sum_{1 \le i < j \le n}\; \sum_{1 \le i^\prime <j^\prime \le n}   
    \mspace{-8mu} \Big [\sE{\aij}  \sE{\aijp} - \sE{\roij} \Big]\notag\\
  & +   4  \mspace{-12mu} \sum_{(i,j) \in \nE(\bB)}\sum_{(i^\prime,j^\prime) \in \cE(\bB)}
    \mspace{-16mu} \Big [ \sE{\aij}\sE{\aijp}  - \sE{\roij}\Big] \label{weget}
\end{align}
Finally, we can extract from the second term in the first line of (\ref{weget}) the term that
corresponds to \mbox{$(i,j) = (i^\prime,j^\prime)$},
\begin{align}
  & \sum_{ 1\le i < j \le n}  \sum_{1 \le i^\prime <j^\prime \le n}
    \sE{\aij}  \sE{\aijp} - \sE{\roij}\notag \\
  & = \mspace{-8mu} \sum_{1 \le i < j \le n}  \sum_{\stackrel{1 \le i^\prime <j^\prime \le n}{(i^\prime,j^\prime)
    \neq  (i,j)}}
    \mspace{-8mu} \sE{\aij}  \sE{\aijp} - \sE{\roij}
    + \mspace{-16mu} \sum_{1 \le i < j \le n}
    \mspace{-16mu} \sE{a_{\aij}}  \sE{\aij} - \sE{\rho_{ij,ij}},
\end{align}
Now if $(i,j) = (i^\prime,j^\prime)$ we have
\begin{equation}
  \sE{\rho_{ij,ij}} = \sum_{k=1}^N \akij \akij = \sum_{k=1}^N \akij  = \sE{\aij},
\end{equation}
and therefore
\begin{align}
  &    \sum_{1 \le i < j \le n}  \sum_{1 \le i^\prime <j^\prime \le n}    
    \left[\sE{\aij}  \sE{\aijp} - \sE{\roij} \right] \notag\\
  & = \sum_{1 \le i < j \le n}\;
    \sum_{\stackrel{1 \le i^\prime <j^\prime \le n}{(i^\prime,j^\prime) \neq  (i,j)}}    
    \mspace{-12mu} \sE{\aij}  \sE{\aijp} - \sE{\roij}
    + \mspace{-12mu}  \sum_{1 \le i < j \le n}
    \mspace{-12mu} \sE{\aij} \left(\sE{\aij} -1 \right) \label{andso}
\end{align}
Substituting (\ref{andso}) into (\ref{weget}), we obtain  the result advertised in the lemma.
\end{proof}
\subsection{Concentration of the sample \fr functions for large sample size}
In the following two lemmata, we show that for large $N$, the sample \fr functions $\widehat{F}_1$ and $\widehat{F}_2$
concentrate around their population counterparts, $F_1$ and $F_2$.
\begin{lemma}
  \label{lemma5}
  For all $\delta \in (0,1)$, there exists $N_\delta$, such that for all $N\ge N_\delta$,
  \begin{equation}
    \F[1]{\bB} =
    \sum_{(i,j) \in \cE(\bB)}  \mspace{-8mu}
    \left (1 - 2\pij \right) + \mspace{-8mu} \sum_{1 \le i < j \le n} \pij
    + \O{\frac{1}{\sqrt{N}}},
  \end{equation}
  with probability $1-\delta$ over the realization of the sample $\left\{ \Gk \right\}_{1\leq k \leq N}$ .
\end{lemma}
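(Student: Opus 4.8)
The plan is to subtract the population expression (\ref{frechetMedianOpt}) for $F_1(\bB)$ from the sample expression (\ref{F1_sample}) for $\F[1]{\bB}$ and to control the resulting sum of edgewise deviations $\sE{\aij} - \pij$. Combining Lemma~\ref{lemma1} and Lemma~\ref{lemma4},
\begin{equation*}
  \F[1]{\bB} - F_1(\bB) = -2 \sum_{(i,j) \in \cE(\bB)} \big( \sE{\aij} - \pij \big) + \sum_{1 \le i < j \le n} \big( \sE{\aij} - \pij \big),
\end{equation*}
so it suffices to bound $M_N \eqdef \max_{1 \le i < j \le n} \big\lvert \sE{\aij} - \pij \big\rvert$ with probability at least $1-\delta$: on any event on which $M_N \le t$, the triangle inequality together with $\lvert \cE(\bB)\rvert \le n(n-1)/2$ gives $\big\lvert \F[1]{\bB} - F_1(\bB)\big\rvert \le \tfrac{3}{2}n(n-1)\,t$, and this bound is uniform in $\bB$.

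Next I would bound $M_N$ by a concentration inequality followed by a union bound. For each fixed pair $(i,j)$, $\sE{\aij} = \tfrac1N\sum_{k=1}^N \akij$ is an average of $N$ independent $\BERN(\pij)$ variables, so Hoeffding's inequality gives $\prob{\,\big\lvert \sE{\aij} - \pij\big\rvert > t\,} \le 2e^{-2Nt^2}$ for every $t>0$. Taking a union bound over the at most $n(n-1)/2$ pairs,
\begin{equation*}
  \prob{\, M_N > t \,} \le n(n-1)\,e^{-2Nt^2}.
\end{equation*}
Choosing $t = t_N \eqdef \big(\tfrac{1}{2N}\log\tfrac{n(n-1)}{\delta}\big)^{1/2}$ makes the right-hand side at most $\delta$, and $t_N = \O{1/\sqrt N}$ since $n$ and $\delta$ are held fixed. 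Combining this with the first paragraph, on an event of probability at least $1-\delta$ we get $\big\lvert \F[1]{\bB} - F_1(\bB)\big\rvert \le \tfrac32 n(n-1)\,t_N = \O{1/\sqrt N}$ for all $\bB \in \cS$, which is the assertion of the lemma (take $N_\delta$ so that, say, $t_N \le 1$).

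This argument is essentially routine, so there is no real obstacle. The two points worth flagging are that the constant absorbed into $\O{\cdot}$ depends on $n$ and $\delta$ through the union bound — harmless here, since the ambient vertex set is fixed throughout — and that the good event $\{M_N \le t_N\}$ does not involve $\bB$, so the estimate in fact holds simultaneously for every $\bB \in \cS$, which is exactly the form that will be fed into the proof of Theorem~\ref{theorem2}. If a dimension-free constant were desired, one could replace Hoeffding by Bernstein's inequality and pick up a factor $\sum_{1\le i<j\le n} p_{ij}(1-p_{ij})$, but that refinement is not needed. An entirely analogous (and, as the paper notes, simpler) computation will handle the $\widehat{F}_2$ estimate in the next lemma.
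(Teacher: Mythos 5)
Your proposal is correct and follows essentially the same route as the paper's own proof: Hoeffding's inequality for each entry $\sE{\aij}$, a union bound over the $O(n^2)$ pairs to get a uniform deviation of order $1/\sqrt{N}$, and substitution into the expression for $\F[1]{\bB}$ from Lemma~\ref{lemma4}. Your version is slightly more explicit about the triangle-inequality step and about uniformity in $\bB$, but the argument is the same.
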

\begin{proof}
  The sample mean $\sE{\aij}$, defined in (\ref{thesamples}), is the sum of Bernoulli random variables, and it concentrates
  around its mean $\pij$. We use Hoeffding inequality to bound the variation of $\sE{\aij}$ around $\pij$. For each
  $ 1 \le i< j \le n$, we have,

\begin{equation}
  \prob{
    \mspace{-4mu} \bAk \sim \gnP;
    \left \lvert 
      \sE{\aij} -p_{ij} 
    \right \rvert
    \ge \varepsilon
  }
  \le
  \exp{\displaystyle \left(-2N \varepsilon^2\right)}.
\end{equation}
To control $\sum_{k=1}^N \akij$ for all $1 \le i < j < n$, we use a union bound, and we get,
\begin{equation}
  \forall 1 \le i < j < n,\quad
  \left \lvert \sE{\aij} -\pij \right \rvert \le \frac{\alpha}{\sqrt{N}},
  \label{meanConcentrates}
\end{equation}
with probability $1- \delta$, and where $  \alpha = \sqrt{\log(n/\sqrt{2\delta})}$.
From (\ref{meanConcentrates}), and for $N$ large enough we obtain the result announced in the lemma,
\begin{equation}
  \sum_{(i,j) \in \cE(\bB)}
  \mspace{-12mu}
  \left (1 - 2\sE{a_{ij}}\right)
  + \mspace{-16mu}
  \sum_{1 \le i < j \le n} \sE{a_{ij}}
  = \mspace{-24mu}
  \sum_{(i,j) \in \cE(\bB)}
  \mspace{-8mu}
  \left (1 - 2\pij\right)
  + \mspace{-16mu}
  \sum_{1 \le i < j \le n} 
  \mspace{-8mu}
  \pij
  + \O{\frac{1}{\sqrt{N}}}, 
\end{equation}
with probability $1-\delta$.
\end{proof}
Similarly, we show in the following lemma that for large sample size $N$, the sample \fr function $\F[2]{\bB}$ concentrates
around its population counterpart, $F_2(\bB)$.
\begin{lemma}
  \label{lemma6}
  For all $\delta \in (0,1)$, there exists $N_\delta$, such that for all $N\ge N_\delta$,
  \begin{equation}
    \F[2]{\bB} =
    \left[
      \sum_{(i,j) \in \cE(\bB)}  \mspace{-8mu}
      \left (1 - 2\pij \right) + \mspace{-8mu} \sum_{1 \le i < j \le n} \pij
    \right]^2
    \mspace{-12mu}  + \mspace{-12mu}
    \sum_{1 \le i < j \le n} \pij \left(1 - \pij \right)
    + \O{\frac{1}{\sqrt{N}}},
    \label{F2_concentrates}
  \end{equation}
  with probability $1-\delta$ over the realization of the sample $\left\{ \Gk \right\}_{1\leq k \leq N}$ .
\end{lemma}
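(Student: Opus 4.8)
The plan is to start from the exact identity for $\F[2]{\bB}$ established in Lemma~\ref{lemma4} and show that every empirical quantity appearing in it lies within $\O{1/\sqrt{N}}$ of its population value, while the ``extra'' correlation terms in (\ref{F2_sample}) collapse to zero (up to $\O{1/\sqrt{N}}$). Comparing what survives with the expression for $F_2(\bB)$ in Lemma~\ref{lemma1} will then give the claim, with $N_\delta$ depending on $n$ and $\delta$.

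First I would establish uniform concentration of the two families of empirical quantities in (\ref{F2_sample}). For the sample means $\sE{\aij}$, Lemma~\ref{lemma5} already supplies $\big\lvert \sE{\aij} - \pij \big\rvert \le \alpha/\sqrt{N}$ for all $1\le i<j\le n$ simultaneously, on an event of probability $1-\delta/2$. For the sample correlations $\sE{\roij}$, observe that $\akij\akijp \in \{0,1\}$, so $\sE{\roij}$ is again an average of i.i.d.\ Bernoulli variables, with mean $\E{\akij\akijp} = \pij\pijp$ whenever $(i,j)\neq(i^\prime,j^\prime)$ by independence of distinct edges in $\gnP$. Hoeffding's inequality combined with a union bound over the $\O{n^4}$ relevant pairs then yields $\big\lvert \sE{\roij} - \pij\pijp \big\rvert \le \beta/\sqrt{N}$ for all such pairs simultaneously, on an event of probability $1-\delta/2$, with $\beta$ logarithmic in $n/\delta$. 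Intersecting the two events, both bounds hold with probability $1-\delta$.

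Next I would substitute these bounds into (\ref{F2_sample}). The key point is that the two double sums $\sum\sum\big(\sE{\aij}\sE{\aijp} - \sE{\roij}\big)$ are each $\O{1/\sqrt{N}}$: each summand is the difference of $\sE{\aij}\sE{\aijp} = \pij\pijp + \O{1/\sqrt{N}}$ (a product of two quantities each within $\O{1/\sqrt{N}}$ of a number in $[0,1]$) and $\sE{\roij} = \pij\pijp + \O{1/\sqrt{N}}$, hence is $\O{1/\sqrt{N}}$, and there are only $\O{n^4}$ of them. Those two terms therefore contribute only to the $\O{1/\sqrt{N}}$ remainder, and what is left is precisely the structure of $F_2$. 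Replacing each $\sE{\aij}$ by $\pij$ inside the surviving bracket $\big[\sum_{(i,j)\in\cE(\bB)}(1-2\sE{\aij}) + \sum_{1\le i<j\le n}\sE{\aij}\big]^2 + \sum_{1\le i<j\le n}\sE{\aij}(1-\sE{\aij})$ introduces a further error of order $n^2\cdot 1/\sqrt{N}$ — the bracket is $\O{n^2}$, so the expansion $(a+\varepsilon)^2 = a^2 + \O{a\varepsilon}$ costs $\O{n^2/\sqrt{N}}$ — which is again absorbed into $\O{1/\sqrt{N}}$. The resulting expression is exactly $F_2(\bB)$ by Lemma~\ref{lemma1}, up to the announced $\O{1/\sqrt{N}}$ term.

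The main obstacle is bookkeeping rather than depth: one must verify that the remainder genuinely remains $\O{1/\sqrt{N}}$ after being multiplied by the $\O{n^2}$ and $\O{n^4}$ combinatorial factors, which is harmless because $n$ is fixed, and one must split the failure budget $\delta$ between the sample-mean and sample-correlation union bounds. I expect no conceptual difficulty beyond this careful propagation of errors through the products and the square.
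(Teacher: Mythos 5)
Your proposal is correct and follows essentially the same route as the paper: start from the exact identity of Lemma~\ref{lemma4}, apply Hoeffding plus a union bound to get uniform $\O{1/\sqrt{N}}$ concentration of both $\sE{\aij}$ and $\sE{\roij}$ (the latter around $\pij\pijp$ by independence of distinct edges), observe that the two double sums of $\sE{\aij}\sE{\aijp}-\sE{\roij}$ are therefore negligible, and propagate the errors through the square to recover $F_2(\bB)$. The only differences are cosmetic bookkeeping choices in how the failure budget $\delta$ is split across the union bounds.
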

\begin{proof}
  We have seen in the proof of lemma \ref{lemma5} that the sample mean concentrates around its population mean. From (\ref
  {meanConcentrates}) we have,
\begin{equation}
  \forall 1 \le i < j < n,\quad
  \left \lvert \sE{\aij} -\pij \right \rvert \le \frac{\alpha}{\sqrt{N}}.
\end{equation}
with probability $1- \delta/8$, and where $\alpha = \sqrt{\log{(2n/\sqrt{\delta}})}$. We now study the concentration of the
sample correlation,
\begin{equation}
  \sE{\roij} = \frac{1}{N} \sum_{k=1}^N \akij \akijp,
\end{equation}
when the pair of edges are distinct. Because $(i,j) \neq (i^\prime, j^\prime)$, the terms $\akij$ and $\akijp$ are always
independent, and the product $\akij \akijp$ is a Bernoulli random variable with parameter $\pij \pijp$. We conclude that the
sample correlation is the sum of Bernoullli random variables, and thus it concentrates around its mean, $\pij\pijp$.

  \begin{equation}
      \forall \; 1 \le i< j \le n, \mspace{4mu}
      \forall  \; 1 \le i^\prime< j^\prime\le n,\;
      \left \lvert \sE{\roij} -\pij \pijp \right \rvert \le \frac{\beta}{\sqrt{N}}
    \label{hoeffding3}
  \end{equation}
  with probability $1 - \delta/8$, where $\beta = \sqrt{\log{(n^2/\sqrt{\delta/2})}}$. In summary, we have 
  \begin{align}
    \forall  \; 1 \le i< j \le n, 
     \quad \forall  \; 1 \le i^\prime< j^\prime\le n, &
      \quad \text{with} \quad (i,j) \neq (i^\prime, j^\prime),\notag \\
    \sE{\aij}  =  \pij
     + \O{\frac{1}{\sqrt{N}}}\mspace{-4mu}, 
      \; & \text{and} \;
      \sE{\roij} = \pijp + \O{\frac{1}{\sqrt{N}}},
      \label{approximations}
  \end{align}
  with probability $1 - \delta/4$. We are now in position to substitute $\sE{\aij}$ and $\sE{\roij}$ with the expressions given
  by (\ref{approximations}), in $\F[2]{\bB}$ given by (\ref{F2_sample}) in lemma~\ref{lemma4}. Using (\ref{approximations}), the
  first term in (\ref{F2_sample}) becomes   
\begin{align}
    \bigg[
      \sum_{(i,j) \in \cE(\bB)}
      \left (1 - 2\sE{a_{ij}}\right)
      + \mspace{-16mu}
      \sum_{1 \le i < j \le n} \sE{a_{ij}}
    \bigg]^2 & \notag\\
    = \left[
      \sum_{(i,j) \in \cE(\bB)}  \mspace{-8mu}
      \left (1 - 2\pij\right) + \mspace{-8mu}
      \sum_{1 \le i < j \le n} \pij
    \right]^2 &
    + \O{\frac{1}{\sqrt{N}}}. \label{mainterm}
  \end{align}
  with probability $1 - \delta/4$.
  Also,   we have 
  \begin{equation}
    \sum_{1 \le i < j \le n} \sE{\aij} \left(1 - \sE{\aij} \right)
    =
    \sum_{1 \le i < j \le n} \pij \left(1 - \pij \right)
    +
    \O{\frac{1}{\sqrt{N}}}. \label{residual}
  \end{equation}
  with probability $1 - \delta/4$. The last two terms in (\ref{F2_sample}) can be neglected since,
  \begin{align}
    \sum_{1 \le i < j \le n}
     & \sum_{\stackrel{1 \le i^\prime < j^\prime \le n }{(i,j) \neq (i^\prime,j^\prime)}}
       \Big [ \sE{\aij}  \sE{\aijp}   - \sE{\roij} \Big] \notag \\
    & = \sum_{1 \le i < j \le n}
      \sum_{\stackrel{1 \le i^\prime < j^\prime \le n }{(i,j) \neq (i^\prime,j^\prime)}}
      \mspace{-12mu} \Big [\pij  \pijp - \pij \pijp \Big] + \O{\frac{1}{\sqrt{N}}}
      = \O{\frac{1}{\sqrt{N}}}, \label{neglect1}
  \end{align}
  with probability $1 - \delta/4$. Similarly
  \begin{equation}
    \sum_{(i,j) \in \cE(\bB)}
    \sum_{(i^\prime,j^\prime) \in \nE(\bB)}
    \sE{\aij} \sE{\aijp} - \sE{\roij} = \O{\frac{1}{\sqrt{N}}} \label{neglect2},
  \end{equation}
  with probability $1 - \delta/4$. Substituting (\ref{mainterm}), (\ref{residual}), (\ref{neglect1}), and (\ref{neglect2}) into
  (\ref{F2_sample}) yields the following estimate
  \begin{equation*}
    \F[2]{\bB} =
    \bigg[
      \sum_{(i,j) \in \cE(\bB)}  \mspace{-16mu}
      \left (1 - 2\pij \right) + \mspace{-16mu} \sum_{1 \le i < j \le n} \mspace{-8mu} \pij
    \bigg]^2
    \mspace{-8mu}  + \mspace{-12mu}
    \sum_{1 \le i < j \le n} \mspace{-16mu} \pij \left(1 - \pij \right)
    + \O{\frac{1}{\sqrt{N}}},
  \end{equation*}
  which holds with   with probability $1 - \delta$. 
\end{proof}
\subsection{Proof of Theorem~\ref{theorem2}
  \label{proofTheorem2}}
We prove (\ref{sampleFrechetMean}), in theorem~\ref{theorem2}, for the sample
\fr mean. The proof for the sample \fr median is completely similar (it also uses a concentration of measure argument for the
\fr function defined in (\ref{sample-frechet-median})) and is therefore omitted. Because of lemma~\ref{lemma6},
(\ref{F2_concentrates}) implies that
  \begin{equation*}
    \forall \delta \in (0,1),  \exists N_\delta, \forall N\ge N_\delta, \forall \bB \in \cS,\mspace{32mu}
    \F[2]{\bB} = F_2(\bB) + \O{\frac{1}{\sqrt{N}}},
  \end{equation*}
  with probability $1-\delta$ over the realization of the sample $\left\{ \Gk \right\}_{1\leq k \leq N}$. For $N$ large enough,
  the main term dominates the expression of $\F[2]{\bB}$, and we can neglect the $\O{1/\sqrt{N}}$ term. We are left with
  $F_2(\bB)$, the \fr function for the population mean, given by (\ref{frechetMeanOpt}), in lemma~\ref{lemma1}. The minimum of
  $\F[2]{\bB}$ is thus achieved for the adjacency matrix given by the population \fr mean, $\fm{\pr}$, defined by 
  (\ref{populationFrechetMean}), as advertised in (\ref{sampleFrechetMean}), in theorem~\ref{theorem2}. \qed

\begin{thebibliography}{10}

\bibitem{abbe17}
Emmanuel Abbe, \emph{Community detection and stochastic block models: recent
  developments}, The Journal of Machine Learning Research \textbf{18} (2017),
  no.~1, 6446--6531.

\bibitem{banks98}
David Banks and GM~Constantine, \emph{Metric models for random graphs}, Journal
  of Classification \textbf{15} (1998), no.~2, 199--223.

\bibitem{bardaji10}
Itziar Bardaji, Miquel Ferrer, and Alberto Sanfeliu, \emph{Computing the
  barycenter graph by means of the graph edit distance}, 2010 20th
  International Conference on Pattern Recognition, IEEE, 2010, pp.~962--965.

\bibitem{bollobas07}
Béla Bollobás, Svante Janson, and Oliver Riordan, \emph{The phase transition
  in inhomogeneous random graphs}, Random Structures \& Algorithms \textbf{31}
  (2007), no.~1, 3--122.

\bibitem{chen19}
Jiehua Chen, Danny Hermelin, and Manuel Sorge, \emph{{On Computing Centroids
  According to the p-Norms of Hamming Distance Vectors}}, 27th Annual European
  Symposium on Algorithms (ESA 2019) (Dagstuhl, Germany), vol. 144, 2019,
  pp.~28:1--28:16.

\bibitem{chowdhury18}
Samir Chowdhury and Facundo M{\'e}moli, \emph{The metric space of networks},
  2018.

\bibitem{dubey20}
Paromita Dubey and Hans-Georg M{\"u}ller, \emph{Fr{\'e}chet change-point
  detection}, The Annals of Statistics \textbf{48} (2020), no.~6, 3312--3335.

\bibitem{ferrer09}
Miquel Ferrer, Ernest Valveny, and Francesc Serratosa, \emph{Median graph: A
  new exact algorithm using a distance based on the maximum common subgraph},
  Pattern Recognition Letters \textbf{30} (2009), no.~5, 579--588.

\bibitem{ferrer10}
Miquel Ferrer, Ernest Valveny, Francesc Serratosa, Kaspar Riesen, and Horst
  Bunke, \emph{Generalized median graph computation by means of graph embedding
  in vector spaces}, Pattern Recognition \textbf{43} (2010), no.~4, 1642--1655.

\bibitem{fortunato16}
Santo Fortunato and Darko Hric, \emph{Community detection in networks: A user
  guide}, Physics reports \textbf{659} (2016), 1--44.

\bibitem{frechet47}
Maurice Fr\mbox{\'e}chet, \emph{Les espaces abstraits et leur utilit{\'e} en
  statistique th{\'e}orique et m{\^e}me en statistique appliqu{\'e}e}, Journal
  de la Soci{\'e}t{\'e} Fran{\c{c}}aise de Statistique \textbf{88} (1947),
  410--421.

\bibitem{ginestet17}
Cedric~E. Ginestet, Jun Li, Prakash Balachandran, Steven Rosenberg, and Eric~D.
  Kolaczyk, \emph{Hypothesis testing for network data in functional
  neuroimaging}, The Annals of Applied Statistics \textbf{11} (2017), no.~2,
  725--750.

\bibitem{goldenberg10}
Anna Goldenberg, Alice~X Zheng, Stephen~E Fienberg, Edoardo~M Airoldi, et~al.,
  \emph{A survey of statistical network models}, Foundations and
  Trends{\textregistered} in Machine Learning \textbf{2} (2010), no.~2,
  129--233.

\bibitem{han16}
Fang Han, Xiaoyan Han, Han Liu, Brian Caffo, et~al., \emph{Sparse median graphs
  estimation in a high-dimensional semiparametric model}, The Annals of Applied
  Statistics \textbf{10} (2016), no.~3, 1397--1426.

\bibitem{jain08}
Brijnesh Jain and Klaus Obermayer, \emph{On the sample mean of graphs}, 2008
  IEEE International Joint Conference on Neural Networks (IEEE World Congress
  on Computational Intelligence), IEEE, 2008, pp.~993--1000.

\bibitem{jain16a}
Brijnesh~J Jain, \emph{On the geometry of graph spaces}, Discrete Applied
  Mathematics \textbf{214} (2016), 126--144.

\bibitem{jain16b}
\bysame, \emph{Statistical graph space analysis}, Pattern Recognition
  \textbf{60} (2016), 802--812.

\bibitem{jain09}
Brijnesh~J Jain and Klaus Obermayer, \emph{Algorithms for the sample mean of
  graphs}, International Conference on Computer Analysis of Images and
  Patterns, Springer, 2009, pp.~351--359.

\bibitem{jain12}
\bysame, \emph{Learning in \mbox{Riemannian} orbifolds}, 2012.

\bibitem{jiang01}
Xiaoyi Jiang, Andreas Munger, and Horst Bunke, \emph{On median graphs:
  properties, algorithms, and applications}, IEEE Transactions on Pattern
  Analysis and Machine Intelligence \textbf{23} (2001), no.~10, 1144--1151.

\bibitem{josephs21}
Nathaniel Josephs, Wenrui Li, and Eric~D Kolaczyk, \emph{Network recovery from
  unlabeled noisy samples}, 2021.

\bibitem{kolaczyk17}
Eric~D Kolaczyk, \emph{Topics at the frontier of statistics and network
  analysis:(re) visiting the foundations}, Cambridge University Press, 2017.

\bibitem{kolaczyk20}
Eric~D Kolaczyk, Lizhen Lin, Steven Rosenberg, Jackson Walters, Jie Xu, et~al.,
  \emph{Averages of unlabeled networks: Geometric characterization and
  asymptotic behavior}, The Annals of Statistics \textbf{48} (2020), no.~1,
  514--538.

\bibitem{kovalenko71}
I.N. Kovalenko, \emph{Theory of random graphs}, Cybernetics \textbf{7} (1971),
  no.~4, 575--579.

\bibitem{lunagomez20}
Sim{\'o}n Lunag{\'o}mez, Sofia~C Olhede, and Patrick~J Wolfe, \emph{Modeling
  network populations via graph distances}, Journal of the American Statistical
  Association (2020), 1--18.

\bibitem{meyer22b}
Fran{\c{c}}ois~G. Meyer, \emph{The fr\'echet mean of inhomogeneous random
  graphs}, Complex Networks and Their Applications X, Springer, 2021,
  pp.~1--12.

\bibitem{mukherjee09}
Lopamudra Mukherjee, Vikas Singh, Jiming Peng, Jinhui Xu, Michael~J Zeitz, and
  Ronald Berezney, \emph{Generalized median graphs and applications}, Journal
  of Combinatorial Optimization \textbf{17} (2009), no.~1, 21--44.

\bibitem{schweizer60}
Berthold Schweizer, Abe Sklar, et~al., \emph{Statistical metric spaces},
  Pacific J. Math \textbf{10} (1960), no.~1, 313--334.

\bibitem{snijders11}
Tom~AB Snijders, \emph{Statistical models for social networks}, Annual review
  of sociology \textbf{37} (2011), 131--153.

\end{thebibliography}

  \providecommand{\bysame}{\leavevmode\hbox to3em{\hrulefill}\thinspace}
\providecommand{\MR}{\relax\ifhmode\unskip\space\fi MR }
\providecommand{\MRhref}[2]{%
  \href{http://www.ams.org/mathscinet-getitem?mr=#1}{#2}
}
\providecommand{\href}[2]{#2}

\end{document}